\tikzset{
    scale plot marks/.is choice,
    scale plot marks/false/.code={
        \def\pgfuseplotmark##1{\pgftransformresetnontranslations\csname pgf@plot@mark@##1\endcsname}
    },
    scale plot marks/true/.style={},
    scale plot marks/.default=true
}
\newtheorem{theorem}{Theorem}
\newtheorem{example}{Example}
\newtheorem{lemma}[theorem]{Lemma}
\newtheorem{proposition}[theorem]{Proposition}
\newtheorem{remark}{Remark}
\newtheorem{assumption}{Assumption}
\newenvironment{proof}[1][Proof]{\textbf{#1.} }{\ \rule{0.5em}{0.5em}}
\numberwithin{equation}{section}
\newcommand{\Prob}{\mathbb{P}}
\newcommand{\Exp}{\mathbb{E}}
\begin{document}
\title{\textbf{First-passage time asymptotics over moving boundaries\\ for random walk bridges}\vspace{-1ex}}
\author{Fiona Sloothaak \and Vitali Wachtel \and Bert Zwart}
\maketitle

\begin{abstract}
We study the asymptotic tail probability of the first-passage time over a moving boundary for a random walk conditioned to return to zero, where the increments of the random walk have finite variance. Typically, the asymptotic tail behavior may be described through a regularly varying function with exponent $-1/2$, where the impact of the boundary is captured by the slowly varying function. Yet, the moving boundary may have a stronger effect when the tail is considered at a time close to the return point of the random walk bridge. In the latter case, a phase transition appears in the asymptotics, of which the precise nature depends on the order of distance between zero and the moving boundary.
\end{abstract}

\section{Introduction}
The asymptotic behavior of random walks has long been an extremely popular topic in probability. In 1951, Donsker~\cite{Donsker1951} shows that a suitably rescaled stable random walk converges to a Brownian motion. Many extensions have been studied over the years, such as generalizations to random walks in the domain of attraction of a stable law~\cite{Skorokhod1957} and additional conditioning properties. For example, an invariance principle was shown for random walk bridges in~\cite{Liggett1968}, whereas~\cite{Iglehart1974,Bolthausen1976,Doney1985} developed invariance principles for random walks conditioned to stay positive. Recently, these two types of conditioning have been combined in~\cite{Caravenna2013} to an invariance principle for random walk bridges conditioned to stay positive over the entire interval.

A natural question that arises is whether and how these results extend to moving boundaries. That is, how does the random walk behave asymptotically, conditioned it stays above a boundary sequence that is not necessarily zero or even constant? This topic, as well as the closely related first-passage asymptotics, has been addressed in~\cite{Novikov1981,Novikov1983,Greenwood1984,Greenwood1987,DenisovShneer2013,Aurzada2016,Denisov2016} and many more.

In this paper, we include a moving boundary in a particular random walk bridge setting. More precisely, we consider a random walk bridge with increments that have zero mean and finite variance. The purpose is to derive the asymptotic tail of the first-passage time over a moving boundary for this random walk bridge. We stress that we are only considering the tail for all times that are well before the random walk bridge returns to zero. In other words, we extend a random walk bridge in the Brownian setting to stay above a moving boundary over part of its interval.

Besides this problem being of intrinsic interest, our inspiration comes from a seemingly unrelated area: cascading failure models. These models are used to describe systems of interconnected components where failures possibly trigger subsequent failures of other components. A typical reliability measure in such problems is the probability that the number of failures exceeds a certain threshold. Analytic results are obtained for particular settings~\cite{Dobson2004,Sloothaak2016}, but allow for limited generalizations. It turns out that this problem has an equivalent random-walk bridge representation, where the objective translates to the probability that the first-passage time over a moving boundary exceeds the threshold. In Section~\ref{sec:MainResults} we demonstrate this relation in detail.

Clearly, the asymptotic behavior of this first-passage time depends on the boundary sequence. We consider all boundary sequences that are within square-root order from zero, and hence relatively not too far from zero with respect to time. Our results distinguishes between two regimes. The first concerns thresholds that are significantly smaller than the time that the random walk returns to zero, whereas the second considers thresholds close to the return point. In the first case, the objective  can be described asymptotically through a regularly varying function with exponent $-1/2$. When the boundary sequence satisfies certain additional conditions, as explained in e.g.~\cite{Denisov2016}, the objective has a power-law decay with a preconstant that can be interpreted in a probabilistic way. However, depending on how close the boundary remains to zero, a phase transition possibly occurs when the threshold is close to the return point of the random walk bridge. This intriguing phenomenon reflects the strong dependence on the boundary: the effect may not solely be captured in a slowly varying function, but can affect the behavior much more drastically.

The paper is organized as follows. In Section~\ref{sec:Preliminaries} we state our assumptions, and point out known results that are used throughout this paper. Our main results are presented in Section~\ref{sec:MainResults}. In Section~\ref{sec:ThresholdSufficientlyFarFromReturnPoint} we prove the result in case that the threshold is sufficiently far from the return point, while the proof in the other case is given in Section~\ref{sec:Threshold close to return point}. In particular, the proof in the latter case requires a result on the uniform convergence of an unconditioned random walk while staying above a moving boundary, which we state and prove in~Section~\ref{secsec:DensityOfRWAtTheThreshold}.

\section{Preliminaries}
\label{sec:Preliminaries}
Before presenting our main results, we first introduce some notation, state our assumptions and point out the consequences.

\subsection{Notation}
Let $X_i$, $i\geq 1$ be independent, identically distributed random variables with $\Exp X_i=0$ and $\Exp X_i^2=1$ for all $i\geq 1$. Define the random walk
\begin{align*}
\begin{array}{ll}
S_m:= \sum_{i=1}^m X_i, & m\geq 1.
\end{array}
\end{align*}

\noindent
We refer to $\{g_i\}_{i \in \mathbb{N}}$ as the boundary sequence. Define the stopping time
\begin{align*}
\tau_g := \min\{i \geq 1 : S_i \leq g_i \},
\end{align*}
i.e.~the first-passage time of the random walk over the (moving) boundary. In case that $g_i=x$ for all $i \geq 1$, we write $T_x:=\tau_g$ for the stopping to emphasize the constant boundary.

Finally, we use some notation throughout this paper to classify the order of magnitude. We write $a_n=o(b_n)$ if $\limsup_{n \rightarrow \infty} a_n/b_n =0$ and $a_n=O(b_n)$ if $\limsup_{n \rightarrow \infty} a_n/b_n< \infty$. Similarly, we write $a_n=\omega(b_n)$ if $\lim_{n \rightarrow \infty} b_n/a_n =0$ and $b_n=\Omega(b_n)$ if $\limsup_{n \rightarrow \infty} b_n/a_n< \infty$. Finally, we write $a_n = \Theta(b_n)$ if both $a_n=O(b_n)$ and $a_n = \Omega(b_n)$, and denote $a_n \sim b_n$ if $\lim_{n\rightarrow\infty} a_n/b_n=1$.

\subsection{Assumptions and properties}
\label{secsec:Assumptions}
First, we make some assumptions on the increments.
\begin{assumption}
The increments of the random walk are independent and identically distributed with mean zero and variance one. Additionally, we assume the that the law of the increments has a density $f(\cdot)$ (almost everywhere) and that there exists a $n_0$ such that $f_{n_0}(\cdot)$, the density corresponding to $S_{n_0}$, is bounded (almost everywhere).
\label{ass:Increments}
\end{assumption}

Note that every random walk with increments with mean zero and finite variance can be rescaled to satisfy Assumption~\ref{ass:Increments}, i.e.~we cover all random walks with increments that fall in the normal domain of attraction of the normal distribution. We point out that the boundedness requirement on the density function of the random walk for some $n_0$ in Assumption~\ref{ass:Increments} is the necessary and sufficient for a uniform convergence between the scaled density of the position of the random walk towards the standard normal density~\cite[p.~198]{Petrov1975}. Specifically, let $\phi(\cdot)$ and $\Phi(\cdot)$ denote the density function and the distribution function of a standard normal random variable, respectively. Then,
\begin{align}
\lim_{n\rightarrow \infty} \sup_{x \in \mathbb{R}} \left| \Prob\left( \frac{S_n}{\sqrt{n}} \leq x \right) - \Phi(x) \right| = 0, \hspace{1cm} \lim_{n\rightarrow \infty} \sup_{x \in \mathbb{R}} \left| \sqrt{n} f_n(\sqrt{n} x) - \phi(x) \right| = 0.
\label{eq:RandomWalkConvergenceProperties}
\end{align}

\noindent
Next, we assume that the boundary sequence $\{g_i\}_{i \in \mathbb{N}}$ does not move too far from zero.
\begin{assumption}
The boundary sequence $\{g_i\}_{i \in \mathbb{N}}$ satisfies
\begin{align}
|g_i|= o(\sqrt{i})
\label{eq:ConditionBoundary1}
\end{align}
for all $i \geq 1$, and 
\begin{align}
\Prob\left( \tau_g > n \right)>0, \hspace{0.5cm} \forall n \geq 1.
\label{eq:ConditionBoundary2}
\end{align}
\label{ass:Boundary}
\end{assumption}

Under Assumption~\ref{ass:Boundary}, it is known that the position of the rescaled random walk, conditioned it stays above the boundary, converges to a Rayleigh distribution~\cite{Denisov2016}. To be precise, as $n \rightarrow \infty$,
\begin{align}
\Prob\left(S_n > g_n + v \sqrt{n} | \tau_g > n \right) \sim e^{-v^2/2}, \hspace{0.5 cm} \forall v\geq 0.
\label{eq:ConditionedRWDensity}
\end{align}
The first-passage time itself has a regularly varying tail,
\begin{align}
\Prob(\tau_g > n) \sim \sqrt{\frac{2}{\pi}} \frac{L_g(n)}{\sqrt{n}},
\label{eq:ConditionedRWTauStoppingTime}
\end{align}
where $L_g(\cdot)$ is a positive, slowly varying function. This slowly varying function has a probabilistic interpretation:
\begin{align}
L_g(n) = \Exp(S_n-g_n ; \tau_g >n) \sim \Exp(-S_{\tau_g} ; \tau_g \leq n) \in (0,\infty).
\end{align}
The literature offers many discussions and results for which this slowly varying function converges to a finite constant $L_g(\infty):= \lim_{n \rightarrow \infty} L_g(n)$. We note that in case that $L_g(\infty) < \infty$ exists, the slowly varying term can hence be replaced by the constant $\Exp(-S_{\tau_g})$. We refer the reader to~\cite{Denisov2016} for a thorough discussion of this issue. 

\begin{remark}\normalfont
To the best of our knowledge~\cite{Denisov2016} provide the least strong conditions that allow for the existence of a finite $L_g(\infty)$. These conditions are a bit cumbersome, and alternatively, we mention two easily checked cases here that are known from former literature.

In~\cite{Greenwood1987}, they show that if the boundary sequence $g_n, n\geq 1$ is non-increasing and concave, then $L_g(\infty)$ exists and 
\begin{align*}
\sum_{n=1}^\infty \frac{-g_n}{n^{3/2}} < \infty \Longleftrightarrow L_g(\infty) = \Exp(-S_{\tau_g}) \in (0,\infty).
\end{align*}
Particularly, this holds for all finite constant boundaries. In Theorem~5 of~\cite{Wachtel2016a}, the concavity condition is relaxed, but a stronger summability condition is required. Specifically,it is shown that if $g_n, n\geq 1$ is non-increasing,
\begin{align*}
\sum_{n=1}^\infty \frac{\log^{1/2} n}{n^{3/2}}(-g_n) < \infty \Longrightarrow L_g(\infty) = \Exp(-S_{\tau_g}) \in (0,\infty).
\end{align*}
\end{remark}

In this paper, we consider a random walk that returns to zero at time $n$. The objective is to derive the asymptotic behavior of the probability that this random walk stay above a moving boundary over part of its interval. That is, given that the random walk returns to zero at time $n$, what is the asymptotic probability of the random walk staying above the moving boundary up to time $k:=k_n$? We refer to $k$ as the threshold and assume that it is at least $\omega(1)$ distance from both zero and $n$.

\begin{assumption}
\label{ass:Threshold}
The threshold $k:=k_n$ satisfies both $k \rightarrow \infty$ and $n-k \rightarrow \infty$ as $n\rightarrow \infty$. 
\end{assumption}

\section{Main results}
\label{sec:MainResults}
We distinguish between two cases: one where the threshold is not too close to the point of return of the random walk bridge, and one where it is.
\begin{theorem}
Suppose $\lim_{n \rightarrow \infty} k/n <1$. Then, as $n \rightarrow \infty$,
\begin{align}
\Prob\left( \tau_g >k |S_n =0 \right) \sim
\sqrt{\frac{2}{\pi}} L_g(k) \sqrt{\frac{n-k}{n}} k^{-1/2}.
\end{align}
\label{thm:MainResultProportional}
\end{theorem}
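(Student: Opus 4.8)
The plan is to condition on the position $S_k$ of the random walk at the threshold time $k$ and to exploit the fact that, conditionally on the event $\{\tau_g > k\}$ and on $S_k$, the future increments are independent of the past, so that the bridge condition $S_n = 0$ only interacts with $S_k$ and the remaining $n-k$ steps. Concretely, I would write
\begin{align*}
\Prob\left( \tau_g > k \mid S_n = 0 \right) = \frac{1}{f_n(0)} \int_{0}^{\infty} f_k\bigl(\tau_g > k,\, S_k \in dy\bigr)\, f_{n-k}(-y),
\end{align*}
where $f_k(\tau_g>k, S_k\in dy)$ denotes the sub-probability density of $S_k$ restricted to the survival event, and the reversal symmetry of the increments turns the ``return to zero'' factor into $f_{n-k}(-y)=f_{n-k}(y)$ (using $-X_i\stackrel{d}{=}X_i$ is not assumed, so more carefully one keeps $f_{n-k}(-y)$; the local limit theorem below only needs its asymptotics). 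The denominator satisfies $\sqrt{n}f_n(0)\to\phi(0)=1/\sqrt{2\pi}$ by~\eqref{eq:RandomWalkConvergenceProperties}.

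Next I would rescale, substituting $y = v\sqrt{k}$. The key analytic inputs are: first, the local limit theorem~\eqref{eq:RandomWalkConvergenceProperties} applied to $S_{n-k}$, which gives $\sqrt{n-k}\,f_{n-k}(-v\sqrt{k}) \to \phi(0) = 1/\sqrt{2\pi}$ uniformly for $v$ in compact sets, since $v\sqrt{k}/\sqrt{n-k}\to 0$ when $k/n$ is bounded away from $1$; and second, the convergence~\eqref{eq:ConditionedRWDensity} of the conditioned position to a Rayleigh law, which, together with the tail asymptotics~\eqref{eq:ConditionedRWTauStoppingTime}, tells us that $\frac{\sqrt{k}}{\Prob(\tau_g>k)}f_k(\tau_g>k, v\sqrt{k}+g_k\in dv\sqrt{k})$ behaves like the Rayleigh density $v e^{-v^2/2}\,dv$. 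Assembling these, the integral $\int_0^\infty v e^{-v^2/2}\,dv = 1$ appears, and collecting the prefactors $f_n(0)^{-1}\sim\sqrt{2\pi n}$, $\Prob(\tau_g>k)\sim\sqrt{2/\pi}\,L_g(k)k^{-1/2}$, and $f_{n-k}(\cdot)\sim (2\pi(n-k))^{-1/2}$ yields exactly $\sqrt{2/\pi}\,L_g(k)\sqrt{(n-k)/n}\,k^{-1/2}$.

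The main obstacle is justifying the interchange of limit and integral: the convergence in~\eqref{eq:ConditionedRWDensity} is stated only pointwise in $v$, not as a convergence of densities uniformly in $v$ with a dominating function, so I cannot immediately pass to the limit inside $\int_0^\infty$. To handle this I would split the integral into three ranges: a compact range $v\in[\delta,M]$ where the pointwise/uniform convergence applies directly; a neighbourhood of zero $v\in[0,\delta]$, where one bounds the contribution using $\Prob(g_k < S_k \le g_k + \delta\sqrt{k}\mid \tau_g>k)\to 1-e^{-\delta^2/2} = O(\delta^2)$ from~\eqref{eq:ConditionedRWDensity} together with the fact that $f_{n-k}(-y)$ is uniformly bounded (for $n-k\ge n_0$) by Assumption~\ref{ass:Increments}; and a tail range $v>M$, where one uses $\Prob(S_k > g_k + M\sqrt{k}\mid \tau_g>k)\to e^{-M^2/2}$, again together with boundedness of $f_{n-k}$, to make the contribution $O(e^{-M^2/2})$. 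Letting $\delta\downarrow 0$ and $M\uparrow\infty$ after $n\to\infty$ closes the argument. A secondary technical point is that~\eqref{eq:RandomWalkConvergenceProperties} gives uniform convergence of the density of $S_{n-k}$ only when $n-k\to\infty$, which is guaranteed by Assumption~\ref{ass:Threshold}; the restriction $\limsup k/n < 1$ is exactly what keeps the rescaled argument $v\sqrt{k}/\sqrt{n-k}$ bounded, so that the local central limit theorem can be invoked without a Gaussian correction term — in the borderline regime $k/n\to 1$ this fails and is precisely the content of the companion theorem.
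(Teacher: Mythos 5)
Your high-level strategy matches the paper's: condition on $S_k$, factor the bridge probability into a ``conditioned-to-survive'' density at time $k$ times a free density on the remaining $n-k$ steps, then plug in the Rayleigh limit~\eqref{eq:ConditionedRWDensity} and the local limit theorem~\eqref{eq:RandomWalkConvergenceProperties}. The paper uses a discretization with mesh $\Delta\downarrow 0$ where you propose a three-range truncation in $v$, but the scaffolding is the same.

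However, there is a genuine error in the analytic core. You assert that $v\sqrt{k}/\sqrt{n-k}\to 0$ when $k/n$ is bounded away from $1$, and conclude $\sqrt{n-k}\,f_{n-k}(-v\sqrt{k})\to\phi(0)$, so that $f_{n-k}$ can be pulled out of the integral as the constant $(2\pi(n-k))^{-1/2}$ and the remaining integral is $\int_0^\infty v e^{-v^2/2}\,dv=1$. That vanishing claim is false: if $k/n\to c\in(0,1)$, then $v\sqrt{k}/\sqrt{n-k}\to v\sqrt{c/(1-c)}$, which is bounded but not zero. The correct limit of the local CLT is therefore $\sqrt{n-k}\,f_{n-k}(-v\sqrt{k})\to \phi\bigl(v\sqrt{c/(1-c)}\bigr)=\tfrac{1}{\sqrt{2\pi}}\exp\bigl(-\tfrac{v^2}{2}\cdot\tfrac{k}{n-k}\bigr)$, and this $v$-dependent Gaussian factor \emph{must} stay inside the integral. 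Computing it correctly gives
\begin{align*}
\int_0^\infty v\, e^{-v^2/2}\, e^{-\frac{v^2}{2}\cdot\frac{k}{n-k}}\,dv = \frac{n-k}{n},
\end{align*}
and it is precisely this factor $\tfrac{n-k}{n}$ that converts the prefactor from $\sqrt{n/(n-k)}$ to the correct $\sqrt{(n-k)/n}$. Indeed, if you multiply out the ingredients you yourself list — $f_n(0)^{-1}\sim\sqrt{2\pi n}$, $\Prob(\tau_g>k)\sim\sqrt{2/\pi}\,L_g(k)k^{-1/2}$, $(2\pi(n-k))^{-1/2}$, and the claimed integral value $1$ — you obtain $\sqrt{2/\pi}\,L_g(k)\,k^{-1/2}\sqrt{n/(n-k)}$, not the claimed $\sqrt{2/\pi}\,L_g(k)\,k^{-1/2}\sqrt{(n-k)/n}$; the two differ by a factor $\tfrac{n-k}{n}$, i.e.\ exactly the piece that went missing. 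Your later remark that the restriction $\limsup k/n<1$ ``keeps the rescaled argument bounded, so that the local CLT can be invoked without a Gaussian correction term'' also contradicts your earlier claim that the argument tends to zero, and in any case draws the wrong conclusion: boundedness is what lets the LLT apply uniformly, but the Gaussian correction term is not negligible and is exactly what the paper's proof carries through (note the factor $e^{-\frac{z^2}{2}\cdot\frac{k}{n-k}}$ in their discretized sum). Your truncation argument for handling $v\in[0,\delta]$ and $v>M$ is sound as far as it goes, but it does not repair the omitted factor; the compact middle range $v\in[\delta,M]$ is where the Gaussian weight contributes, and it is not a constant there.
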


Note that when $k$ is not too close to the boundary, the impact of the boundary is completely captured by the slowly varying function. When $k$ moves closer to $n$, the behavior of the boundary turns more relevant and possibly results to a change in asymptotics.

\begin{theorem}
Suppose $k=n-o(n)$. Additional to the Assumption~\ref{ass:Boundary}, suppose there exists a $\epsilon \in (0,1)$ such that 
\begin{align}
\sup_{j \in [(1-\epsilon)k,k]} |g_j-g_k| \leq \alpha(\epsilon) |g_k|,
\label{eq:AdditionalAssumption}
\end{align}
for every large enough $n$, with $\alpha(\epsilon) \rightarrow 0$ as $\epsilon \downarrow 0$. Then, as $n \rightarrow \infty$,
\begin{align}
\Prob\left( \tau_g >k |S_n =0 \right) \sim \left\{ \begin{array}{ll}
\sqrt{\frac{2}{\pi}} L_g(k) \frac{\sqrt{n-k}}{k}, &\textrm{if } |g_k| = o(\sqrt{n-k}), \\
\sqrt{\frac{2}{\pi}} L_g(k) \gamma\left(\frac{|g_k|}{\sqrt{n-k}}\right) \frac{\sqrt{n-k}}{k}, &\textrm{if } |g_k| = \Theta(\sqrt{n-k}), \\
2 L_g(k) \frac{|g_k|}{k}, &\textrm{if } |g_k| = \omega(\sqrt{n-k}), g_k<0,\\
\end{array} \right.
\label{eq:MainTheoremBoundary}
\end{align}
where 
\begin{align}
\begin{split}
\begin{array}{ll}
\gamma(y) := e^{-\frac{y^2}{2}} - y \int_{x=y}^\infty e^{-\frac{x^2}{2}} \, dx. 
\end{array}
\end{split}
\label{eq:DefEtaGamma}
\end{align}
\label{thm:MainResultBoundary}
\end{theorem}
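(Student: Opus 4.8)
The plan is to reduce the bridge probability to a ratio of two unconditioned quantities and then perform a careful asymptotic analysis near the return point. By the Markov property at time $k$,
\begin{align*}
\Prob\left( \tau_g > k \mid S_n = 0 \right) = \frac{1}{f_n(0)} \int_{y > g_k} \Prob\left( \tau_g > k, S_k \in dy \right) f_{n-k}(-y),
\end{align*}
where $f_n(0) \sim 1/\sqrt{2\pi n}$ by the local limit theorem in~\eqref{eq:RandomWalkConvergenceProperties}. The first step is to change variables $y = g_k + v\sqrt{n-k}$ and invoke a \emph{local} version of~\eqref{eq:ConditionedRWDensity}: on the event $\{\tau_g > k\}$, the density of $(S_k - g_k)/\sqrt{k}$ converges uniformly to the Rayleigh density $v e^{-v^2/2}$, with total mass $\Prob(\tau_g > k) \sim \sqrt{2/\pi}\, L_g(k) k^{-1/2}$. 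This uniform local convergence is exactly the auxiliary result the introduction promises to establish in Section~\ref{secsec:DensityOfRWAtTheThreshold}, so I would cite it here. Combined with $\sqrt{n-k}f_{n-k}(-y) \to \phi(v + g_k/\sqrt{n-k})$ from~\eqref{eq:RandomWalkConvergenceProperties}, the integral becomes, up to $(1+o(1))$ factors,
\begin{align*}
\frac{\sqrt{2\pi n}}{\sqrt{n-k}} \cdot \sqrt{\tfrac{2}{\pi}} \frac{L_g(k)}{\sqrt{k}} \int_{v > 0} v\, e^{-v^2/2}\, \phi\!\left(v + \tfrac{g_k}{\sqrt{n-k}}\right) dv \cdot (1+o(1)),
\end{align*}
and the three regimes in~\eqref{eq:MainTheoremBoundary} correspond to the limiting behavior of this integral as the shift $g_k/\sqrt{n-k}$ tends to $0$, a constant, or $\pm\infty$.

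Next I would evaluate the integral $I(a) := \int_0^\infty v\, e^{-v^2/2}\, \phi(v+a)\, dv$. A direct computation (substitute $w = v + a$, or differentiate under the integral sign) gives $I(a) = \tfrac{1}{\sqrt{2}} e^{-a^2/4}\,\gamma(a/\sqrt{2})$ up to the normalization constant $1/\sqrt{2\pi}$ hiding in $\phi$; tracking the constants, one finds that with $a = g_k/\sqrt{n-k}$ the leading contribution is $\tfrac{1}{\sqrt{2\pi}}\,\gamma\!\big(|g_k|/\sqrt{n-k}\big)$ in the $\Theta$-regime. When $|g_k| = o(\sqrt{n-k})$ we have $a \to 0$, $\gamma(0) = 1$, recovering the first line. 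When $|g_k| = \omega(\sqrt{n-k})$ with $g_k < 0$, so $a \to -\infty$, the asymptotics $\gamma(y) \sim 2|y|$ as $y \to -\infty$ (since $e^{-y^2/2} - y\int_y^\infty e^{-x^2/2}dx \sim -2y$ for $y \to -\infty$) turn $\sqrt{2/\pi}\,L_g(k)\gamma(a)\sqrt{n-k}/k$ into $2 L_g(k)|g_k|/k$, which is the third line; the requirement $g_k < 0$ is what keeps $\{S_k > g_k\}$ from being vacuous and makes the Rayleigh scaling legitimate. Assembling $f_n(0)^{-1} \sim \sqrt{2\pi n}$ with the prefactor $1/\sqrt{n-k}$ and the mass of the conditioned density yields $\sqrt{2/\pi}\,L_g(k)\sqrt{n-k}/k$ times the $\gamma$-factor, matching~\eqref{eq:MainTheoremBoundary} in all three cases.

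The main obstacle is justifying the interchange of limit and integral in the first step uniformly over the relevant range of $v$, particularly the tails. For large $v$ the Rayleigh factor $ve^{-v^2/2}$ provides ample decay, but one must control the pre-limit densities $\Prob(\tau_g > k, S_k \in dy)$ for $y$ far from $g_k$ without a pointwise bound — this is where a dominated-convergence argument needs a uniform integrable majorant, and the boundedness hypothesis on $f_{n_0}$ in Assumption~\ref{ass:Increments} together with a Gaussian-type upper bound on the conditioned density should supply it. A second delicate point arises in the third regime $|g_k| = \omega(\sqrt{n-k})$: here the shift $a \to -\infty$, so the Gaussian factor $\phi(v + a)$ is concentrated near $v \approx |a|$, i.e. at the far tail of the Rayleigh density, and one must verify that the local limit theorem for $S_k$ is still accurate at $S_k \approx g_k + |g_k| = $ values of order $|g_k| = o(\sqrt{k})$ — which it is, by the uniformity in~\eqref{eq:RandomWalkConvergenceProperties} and $|g_k| = o(\sqrt{k})$ from~\eqref{eq:ConditionBoundary1} — and that the additional regularity condition~\eqref{eq:AdditionalAssumption} on the boundary near time $k$ suffices to transfer the tail estimate~\eqref{eq:ConditionedRWTauStoppingTime} from $\tau_g$ to the more refined local statement. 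I expect~\eqref{eq:AdditionalAssumption} to be used precisely to replace $g_j$ by $g_k$ over the window $[(1-\epsilon)k, k]$ when estimating $\Prob(\tau_g > k, S_k \in dy)$ via a comparison with the constant-boundary stopping time $T_{g_k}$, with the error controlled by $\alpha(\epsilon)$ and then letting $\epsilon \downarrow 0$ after $n \to \infty$.
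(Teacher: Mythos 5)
Your overall strategy is the same as the paper's: expand via the Markov property at time $k$, replace $\tilde f_{n-k}$ by its Gaussian approximation using~\eqref{eq:RandomWalkConvergenceProperties}, and feed in a uniform local density for $\Prob(S_k\in dt;\tau_g>k)$ -- which is exactly what the paper's Proposition~\ref{prop:DensityRWConditionedAtTimeK} provides (and which the paper proves from Doney's Proposition~18 together with assumption~\eqref{eq:AdditionalAssumption} to freeze the boundary at $g_k$ over $[(1-\epsilon)k,k]$). You also correctly identify the two technical issues the paper has to deal with separately: a bound near $y=g_k$ where the Rayleigh approximation degrades (the paper's Lemma~\ref{lem:CloseToBoundary} and the renewal function $U$ in Proposition~\ref{prop:DensityRWConditionedAtTimeK}), and controlling the tails so the limit can be taken inside the integral. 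Conceptually the proposal is sound.

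However, the displayed intermediate formula is not correct as written, and tracing through it does not produce the stated answer. Two distinct scales are conflated. If you set $v=(y-g_k)/\sqrt{n-k}$, then the Rayleigh scaling is at $w=(y-g_k)/\sqrt{k}=v\sqrt{(n-k)/k}\to 0$; the Rayleigh density at $w$ linearizes, giving $\Prob(S_k\in dy;\tau_g>k)\sim\sqrt{2/\pi}\,L_g(k)\,k^{-3/2}(y-g_k)\,e^{-(y-g_k)^2/(2k)}\,dy$, and after substituting $y=g_k+v\sqrt{n-k}$ the factor $e^{-(y-g_k)^2/(2k)}=e^{-v^2(n-k)/(2k)}\to 1$. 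There should therefore be no persistent $e^{-v^2/2}$ in the final integral; its presence, together with a prefactor that carries $1/\sqrt{n-k}$ rather than $\sqrt{n-k}$, makes your display off by a diverging factor $k/(n-k)$ from the correct
\begin{align*}
\sqrt{2\pi n}\cdot\sqrt{\tfrac2\pi}\,\frac{L_g(k)\sqrt{n-k}}{k^{3/2}}\int_0^\infty v\,\phi\!\left(v+\tfrac{g_k}{\sqrt{n-k}}\right)dv
\sim \sqrt{\tfrac2\pi}\,L_g(k)\,\frac{\sqrt{n-k}}{k}\,\gamma\!\left(\tfrac{g_k}{\sqrt{n-k}}\right),
\end{align*}
where $\int_0^\infty v\,\phi(v+a)\,dv=\tfrac1{\sqrt{2\pi}}\gamma(a)$ directly from~\eqref{eq:DefEtaGamma} (no rescaling of the argument of $\gamma$, no extra $e^{-a^2/4}$). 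In contrast, the integral $I(a)=\int_0^\infty v e^{-v^2/2}\phi(v+a)\,dv$ you actually wrote evaluates to $\tfrac1{2\sqrt{2\pi}}e^{-a^2/4}\gamma(a/\sqrt2)$, which is \emph{not} the same as $\tfrac1{\sqrt{2\pi}}\gamma(a)$; the claim that "tracking the constants" yields the latter is a non sequitur. Finally, the stated asymptotic $\gamma(y)\sim 2|y|$ as $y\to-\infty$ is wrong: since $\int_y^\infty e^{-x^2/2}dx\to\sqrt{2\pi}$, one gets $\gamma(y)\sim\sqrt{2\pi}\,|y|$, and it is this $\sqrt{2\pi}$ (combined with the $\sqrt{2/\pi}$ prefactor) that produces the clean constant $2$ in the third line of~\eqref{eq:MainTheoremBoundary}. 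Once these algebraic slips are repaired, your derivation becomes the paper's argument, and the remaining work -- establishing the uniform local density (the paper's Section~\ref{secsec:DensityOfRWAtTheThreshold}), the near-boundary bound, and the $\epsilon$-freezing of the boundary -- is exactly what the paper supplies.
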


A typical example that is covered by this framework is when $g_i=-i^\alpha, i\in \mathbb{N}$ with $\alpha<1/2$. The additional assumption~\eqref{eq:AdditionalAssumption} is merely technical: it ensures that the boundary does not fluctuate too much as it moves closer to the threshold. That is, for every $\epsilon>0$ there is a value $\alpha(\epsilon)< \infty$ such that the boundary values does not fluctuate more than $2\alpha(\epsilon)g_k$ in the interval $[(1-\epsilon)k,k]$ for large enough $n$. Particularly, this implies that $\epsilon \in (0,1)$ can be chosen small enough such that $\alpha(\epsilon)<1$, and hence the boundary sequence at time $[(1-\epsilon)k,k]$ has the same sign (either positive, negative or zero). Cases where the boundary sequence strongly oscillates close to the threshold are thus excluded from our framework.

The phase transition that appears in Theorem~\ref{thm:MainResultBoundary} reflects the strong influence of the boundary sequence in this case. It might not be captured solely by the slowly varying function, but can have a much stronger effect. Furthermore, this effect is only influenced by the behavior of the boundary sequence close to the threshold. This observation is best-explained by our approach. We track the position of a random walk at time $k$, conditioned that it stays above the moving boundary till that point. Then we evaluate how likely a reversed random walk moving back from time $n$ can reach that point. The reversed random walk will converge uniformly to a normal density, and is therefore likely to stay within $\sqrt{n-k}=o(\sqrt{k})$ distance from zero. When $g_k<0$, those values are thus likely of order $\max\{\sqrt{n-k},|g_k|\}$ distance from the boundary. The phase transition is then a consequence of how likely the random walk staying above the boundary sequence can move to such values.

\begin{example}\normalfont
As pointed out in the introduction, Theorem~\ref{thm:MainResultProportional} and~\ref{thm:MainResultBoundary} can be applied to a seemingly unrelated problem in cascading failure models. In this example, we will describe a particular cascading failure model as in~\cite{Sloothaak2016}, and translate it to the random-walk bridge setting we consider in this paper.

Consider a system consisting of $n$ (indistinguishable) components. Each component has a limited capacity for the amount of load it can carry before it fails. The network is initially stable, in the sense that every component has sufficient capacity that exceeds the initial load. We assume that the difference between the initial loads and capacities, which we refer to as the \textit{surplus capacity}, are stochastic random variables that are independent and identically distributed with continuous distribution function $F(\cdot)$. In order to trigger a possible cascading failure effect, we include an initial disturbance that causes all components to be additionally loaded with $l_n(1)$. When the capacity of a component is exceeded by its load demands, that component fails. Every component failure causes (equal) additional loading on the remaining components, possibly triggering knock-on effects. We write $l_n(i)$ for the total load surge per component when $i-1$ components have failed, and assume this is a deterministic non-decreasing function. The cascading failure process continues until the capacities on the remaining components are sufficient to deal with the load increases.

A measure of system reliability is the number of component failures after the cascading failure process has ended, written by $A_n$. Since $F(\cdot)$ is continuous, it satisfies the identity~\cite{Sloothaak2016}
\begin{align*}
\Prob\left(A_n \geq k\right) = \Prob\left( U_{(i)}^n \leq F\left(l_n(i)\right), \hspace{.25cm} i=1,...,k \right),
\end{align*}
where $U_{(i)}^n$ denotes the $i$'th order statistic of $n$ uniformly distributed random variables with support $[0,1]$. In~\cite{Sloothaak2016}, they are interested which choices of $F(\cdot)$ and $l_n(\cdot)$ asymptotically exhibits power-law behavior for large thresholds $k$ as in Assumption~\ref{ass:Threshold}. In particular, they consider a setting where 
\begin{align}
F\left(l_n(i)\right) = \frac{\theta+i-1}{n}.
\label{eq:AffineCase}
\end{align}

Next, we show how this problem can be related to our random-walk bridge framework. Consider the random walk $S_n = i-\sum_{i=1}^n E_i$ where $(E_i)_{i\in \mathbb{N}}$ are independent identically exponentially distribution random variables with mean one. It is well-known that
\begin{align*}
\left(U_{(1)}^{n},U_{(2)}^{n},...,U_{(n)}^{n}\right) \overset{d}{=} \left(\frac{E_{1}}{n},\frac{\sum_{i=1}^2 E_i}{n},...,\frac{\sum_{i=1}^n E_i}{n} \, \bigg| \, \sum_{i=1}^{n+1} E_i=n \right).
\end{align*}
Then the probability that the number of component failures exceeds the threshold can be written as
\begin{align*}
\Prob(A_n \geq k) &= \Prob\left( U^n_{(i)} \leq \frac{\theta+i-1}{n}, \hspace{0.25cm} i=1,...,k \right)=  \Prob\left( S_i \geq 1-\theta, \hspace{0.25cm} i=1,...,k | S_{n+1} = 1 \right) \\
&\sim  \Prob\left( S_i \geq 1-\theta, \hspace{0.25cm} i=1,...,k | S_{n} = 0 \right) = \Prob\left( T_{1-\theta} >k \big| S_{n} = 0 \right).
\end{align*}
Theorem~\ref{thm:MainResultProportional} and~\ref{thm:MainResultBoundary} yields the result immediately. That is, as $n\rightarrow \infty$, we obtain 
\begin{align*}
\Prob(A_n \geq k) \sim \sqrt{\frac{2}{\pi}} L_{1-\theta}(k) \sqrt{\frac{n-k}{k n}},
\end{align*}
and since the boundary is constant,
\begin{align*}
\lim_{n \rightarrow \infty} L_{1-\theta}(k) \sim \Exp\left(-S_{T_{1-\theta}}\right) = -(1-\theta)+1=\theta.
\end{align*}
due to the memory-less property of exponentials.

Yet,~\eqref{eq:AffineCase} is a very specific case. In~\cite{Sloothaak2016}, they explore
for which perturbations the power-law behavior prevails. That is, if
\begin{align*}
F\left(l_n(i)\right) = \frac{\theta+i-1-g(i)}{n},
\end{align*}
which perturbations $g(\cdot)$ yield power-law behavior? The analytic approach used in~\cite{Sloothaak2016} allows for relatively limited generalizations. Theorem~\ref{thm:MainResultProportional} and~\ref{thm:MainResultBoundary} provide the answer to a much broader range of possible perturbations, and quantifies its effect on the prefactor in a probabilistic way. 
\end{example}

\section{Threshold sufficiently far from return point}
\label{sec:ThresholdSufficientlyFarFromReturnPoint}
We first consider the case where $\lim_{n \rightarrow \infty} k/n <1$ as in Theorem~\ref{thm:MainResultProportional}. Define the reversed random walk as
\begin{align}
\begin{array}{ll}
\tilde{S}_m = S_{n-m} , & m \geq 0,
\end{array}
\label{eq:DefinitionReversedRW}
\end{align}
where $S_n=0$ (and no condition is posed on the $S_0$). Consequently, $\tilde{S}_m$ obeys the same law as $-S_m$ for all $m\geq 0$. In the proof, we evaluate all events that a random walk conditioned to stay above the moving boundary meets  at time $k$ the reversed (unconditioned) random walk starting at time $k$. When $\lim_{n \rightarrow \infty} k/n <1$, we can use a direct approach to derive the asymptotic behavior.

\begin{proof}[Proof of Theorem~\ref{thm:MainResultProportional}]
Note that
\begin{align*}
\Prob\left(\tau_g > k | S_n = 0  \right) &= \int_{u=g_k}^\infty \Prob\left(\tau_g > k ; S_k \in du | S_n = 0  \right) = \frac{1}{f_n(0)} \int_{u=g_k}^\infty  \Prob\left(S_k \in du ; \tau_g > k \right) \tilde{f}_{n-k}(u)\\
&= \frac{\Prob(\tau_g>k)}{f_n(0)} \int_{u=g_k}^\infty  \Prob\left(S_k \in du | \tau_g > k \right) \tilde{f}_{n-k}(u),
\end{align*}
where $\tilde{f}_{n-k}(\cdot)$ is the density of the reversed random walk at time $n-k$.

The strategy is to bound the integral by two sums that coincide to the same expression as $n \rightarrow \infty$. Fix $\Delta >0$, and write
\begin{align*}
\Delta \mathbb{N}_0 := \{\Delta i : i \geq 0, i \in \mathbb{Z}\}.
\end{align*}
Since the reversed random walk $\tilde{S}_m$ is the same in distribution as $-S_m$, $m\geq 1$, it also satisfies~\eqref{eq:RandomWalkConvergenceProperties} and hence there is a uniform convergence to the normal density. Then,
\begin{align*}
\int_{u=g_k }^\infty  &\Prob\left(S_k \in du | \tau_g > k \right) \tilde{f}_{n-k}(u)  = \int_{v=g_k/\sqrt{k}}^\infty  \Prob\left(\frac{S_k}{\sqrt{k}} \in dv | \tau_g > k \right) \tilde{f}_{n-k}(v \sqrt{k}) \\
&\leq \sum_{z \in \Delta \mathbb{N}_0} \Prob\left(\frac{S_k}{\sqrt{k}} \in \left[\frac{g_k}{\sqrt{k}} + z,\frac{g_k}{\sqrt{k}} + z+\Delta\right) \big| \tau_g > k \right) \sup_{y \in \left[z, z+\Delta\right)} \tilde{f}_{n-k}( g_k + y \sqrt{k}) \\
&\leq \frac{1+o(1)}{\sqrt{2\pi (n-k)}} \sum_{z \in \Delta \mathbb{N}_0} \Prob\left(\frac{S_k}{\sqrt{k}} \in \left[\frac{g_k}{\sqrt{k}} + z,\frac{g_k}{\sqrt{k}} + z+\Delta\right) \big| \tau_g > k \right) \sup_{y \in \left[z, z+\Delta\right)} e^{-\frac{(g_k + y \sqrt{k})^2}{2(n-k)}}.
\end{align*}
Since $e^{-x^2}$ attains its maximum at $x=0$ and is decreasing for $x>0$, we see that the supremum over an interval $[x,x+\Delta)$ of this function is attained at $x$ for all $x\geq0$. Furthermore, since $|g_k|=o(\sqrt{k})$, we observe that $g_k + y \sqrt{k} >0$ for every $y > \Delta$ for large enough $k$. Therefore,
\begin{align*}
\int_{u=g_k}^\infty  &\Prob\left(S_k \in du | \tau_g > k \right) \tilde{f}_{n-k}(u) \leq \frac{1+o(1)}{\sqrt{2\pi (n-k)}} \left( \Prob\left(\frac{S_k}{\sqrt{k}} \in \left[\frac{g_k}{\sqrt{k}},\frac{g_k}{\sqrt{k}}+\Delta\right) \big| \tau_g > k \right) \right.\\
&\hspace{5cm} \left. +\sum_{z \in \Delta \mathbb{N}} \Prob\left(\frac{S_k}{\sqrt{k}} \in \left[\frac{g_k}{\sqrt{k}} + z,\frac{g_k}{\sqrt{k}} + z+\Delta\right) \big| \tau_g > k \right) e^{-\frac{(g_k + z \sqrt{k})^2}{2(n-k)}} \right).
\end{align*}
Using~\eqref{eq:ConditionedRWDensity}, we obtain
\begin{align*}
\int_{u=g_k}^\infty  &\Prob\left(S_k \in du | \tau_g > k \right) \tilde{f}_{n-k}(u) \leq
\frac{1+o(1)}{\sqrt{2\pi (n-k)}} \left(1-e^{-\frac{\Delta^2}{2}} +  \sum_{z \in \Delta \mathbb{N}} \int_{v=z}^{z+\Delta} v e^{-\frac{v^2}{2}} \, dv \cdot e^{-\frac{z^2}{2}\cdot \frac{k}{n-k}}(1+o(1)) \right)\\
&\leq \frac{1+o(1)}{\sqrt{2\pi (n-k)}} \left(2\left(1-e^{-\frac{\Delta^2}{2}}\right) +  \sum_{z \in \Delta \mathbb{N}_{\geq 2}} \int_{v=z}^{z+\Delta} v e^{-\frac{v^2}{2}}  \cdot e^{-\frac{(v-\Delta)^2 k}{2(n-k)}} \, dv \right)\\
&\leq \frac{1+o(1)}{\sqrt{2\pi (n-k)}} \left(2\left(1-e^{-\frac{\Delta^2}{2}}\right) +   \int_{v=2\Delta}^{\infty} v e^{-\frac{v^2}{2}}  \cdot e^{-\frac{(v-\Delta)^2 k}{2(n-k)}} \, dv \right).
\end{align*}

\noindent
We note that $\lim_{n \rightarrow \infty} k/(n-k) \in [0,\infty)$, and hence
\begin{align*}
\limsup_{n \rightarrow \infty} & \frac{n}{\sqrt{n-k}} \int_{u=g_k }^\infty  \Prob\left(S_k \in du | \tau_g > k \right) \tilde{f}_{n-k}(u) \\
&\leq \frac{1}{\sqrt{2\pi}} \limsup_{n \rightarrow \infty}  \frac{n}{n-k} \lim_{\Delta \downarrow 0} \left(2\left(1-e^{-\frac{\Delta^2}{2}}\right) +   \int_{v=2\Delta}^{\infty} v e^{-\frac{v^2}{2}}  \cdot e^{-\frac{(v-\Delta)^2 }{2} \lim_{n \rightarrow \infty} \frac{k}{n-k}} \, dv \right) =\frac{1}{\sqrt{2\pi}}.
\end{align*}

\noindent
Similarly, we obtain the following lower bound
\begin{align*}
\int_{u=g_k }^\infty  &\Prob\left(S_k \in du | \tau_g > k \right) \tilde{f}_{n-k}(u)  = \int_{v=g_k/\sqrt{k}}^\infty  \Prob\left(\frac{S_k}{\sqrt{k}} \in dv | \tau_g > k \right) \tilde{f}_{n-k}(v \sqrt{k}) \\
&\geq \sum_{z \in \Delta \mathbb{N}_0} \Prob\left(\frac{S_k}{\sqrt{k}} \in \left[\frac{g_k}{\sqrt{k}} + z,\frac{g_k}{\sqrt{k}} + z+\Delta\right) \big| \tau_g > k \right) \inf_{y \in \left[z, z+\Delta\right)} \tilde{f}_{n-k}( g_k + y \sqrt{k}) \\
&\geq \frac{1-o(1)}{\sqrt{2\pi (n-k)}}  \sum_{z \in \Delta \mathbb{N}} \int_{v=z}^{z+\Delta} v e^{-\frac{v^2}{2}} \, dv \cdot e^{-\frac{(z+\Delta)^2}{2}\cdot \frac{k}{n-k}} \\
&\geq \frac{1-o(1)}{\sqrt{2\pi (n-k)}} \int_{v=\Delta}^{\infty} v e^{-\frac{v^2}{2}} \cdot e^{-\frac{(v+2\Delta)^2}{2}\cdot \frac{k}{n-k}} \, dv,
\end{align*}
where we applied~\eqref{eq:RandomWalkConvergenceProperties} and~\eqref{eq:ConditionedRWDensity} again. Therefore,
\begin{align*}
 \liminf_{n \rightarrow \infty} \frac{n}{\sqrt{n-k}} \int_{u=g_k }^\infty & \Prob\left(S_k \in du | \tau_g > k \right) \tilde{f}_{n-k}(u) \\
 &\geq \frac{1}{\sqrt{2\pi}}  \liminf_{n \rightarrow \infty} \frac{n}{n-k} \lim_{\Delta \downarrow 0} \int_{v=\Delta}^{\infty} v e^{-\frac{v^2}{2}} \cdot e^{-\frac{(v+2\Delta)^2}{2}\cdot \lim_{n \rightarrow \infty} \frac{k}{n-k}} \, dv = \frac{1}{\sqrt{2\pi}}.
\end{align*}
The upper and lower bound asymptotically coincide. Using~\eqref{eq:ConditionedRWTauStoppingTime}, we conclude as $n \rightarrow \infty$,
\begin{align*}
\Prob\left(\tau_g > k | S_n = 0  \right) &= \frac{\Prob(\tau_g>k)}{f_n(0)} \int_{u=g_k}^\infty  \Prob\left(S_k \in du | \tau_g > k \right) \tilde{f}_{n-k}(u) \sim \frac{\sqrt{2/\pi} L_g(k) k^{-1/2}}{(2 \pi n)^{-1/2}} \frac{1}{\sqrt{2 \pi}} \frac{\sqrt{n-k}}{n} \\
&= \sqrt{\frac{2}{\pi}} L_g(k) \sqrt{\frac{n-k}{k n}}.
\end{align*}
\end{proof}

\section{Threshold close to return point}
\label{sec:Threshold close to return point}
Unfortunately, the analysis in the previous section does not follow through when $k=n-o(n)$. The chosen grid with small steps of order $\sqrt{k}$ is not refined enough to capture the asymptotic behavior in this case. Another approach is needed, which we elaborate on in this section.

\subsection{Density of random walk at the threshold}
\label{secsec:DensityOfRWAtTheThreshold}
For the evaluation of the objective, it is sensible to consider the position of a random walk at time $k$ itself. A uniform convergence result is given by Proposition~18 of~\cite{Doney2012} in case of constant boundaries. As this result is crucial in our analysis, we pose it here for our setting. 

\begin{proposition}[Proposition~18 of \cite{Doney2012}]
Let $x:=x_n \geq 0$ denote the starting point of a random walk (depending on $n$) and let $y:=y_n$ be a sequence of $n$. Let $U(\cdot)$ denote the renewal function in the (strict) increasing ladder height process, and $V(\cdot)$ the renewal function corresponding to the decreasing ladder height process. Let $\Exp(-S_{T_0})$ be the expected position of a random walk at stopping time $T_0$, and $\Exp(-\tilde{S}_{T_0})$ the expected position of a random walk with increments $-X_i, i\geq 0$ at stopping time $T_0$. Then the following results hold uniformly for every $\Delta \in (0,\infty)$ as $n \rightarrow \infty$.
\begin{itemize}
\item[(i)] For $\max\{x/\sqrt{n},y/\sqrt{n} \} \rightarrow 0$, 
\begin{align}
\Prob\left(S_n \in [y,y+\Delta), T_0 > n | S_0=x \right) \sim \frac{V(x) \int_{y}^{y+\Delta} U(w) \, dw}{\sqrt{2\pi} n^{3/2}}.
\label{eq:Doney1}
\end{align}

\item[(ii)] For any (fixed) $D>1$ with $x/\sqrt{n} \rightarrow 0$ and $y/\sqrt{n} \in [D^{-1},D]$, 
\begin{align}
\Prob\left(S_n \in [y,y+\Delta), T_0 > n | S_0=x \right) \sim \sqrt{\frac{2}{\pi}}\frac{\Exp(-S_{T_0}) V(x)  \Delta}{\sqrt{n}}  \frac{y}{n} e^{-\frac{y^2}{2n}},
\label{eq:Doney2}
\end{align}
and uniformly for $y/\sqrt{n} \rightarrow 0$ and $x/\sqrt{n} \in [D^{-1},D]$,
\begin{align}
\Prob\left(S_n \in [y,y+\Delta), T_0 > n | S_0=x \right) \sim \sqrt{\frac{2}{\pi}} \frac{\Exp(-\tilde{S}_{T_0}) U(y)   \Delta}{\sqrt{n}} \frac{x}{n} e^{-\frac{x^2}{2n}},
\label{eq:Doney3}
\end{align}

\item[(iii)] For any (fixed) $D>1$ with $x/\sqrt{n} \in [D^{-1},D]$ and $y/\sqrt{n} \in [D^{-1},D]$,
\begin{align}
\Prob\left(S_n \in [y,y+\Delta), T_0 > n | S_0=x \right)\sim \frac{\Delta q(x/\sqrt{n},y/\sqrt{n})}{\sqrt{n}},
\label{eq:Doney4}
\end{align}
where $q(x,y)$ is the density of $\Prob(W(1) \in dy, \inf_{0\leq t\leq 1} W(t) >0 | W(0)=x)$ with $\{W(t), t\geq 0\}$ the standard Wiener process. This has the explicit form~\cite{Durrett1977}
\begin{align}
q(u,v) = \frac{1}{\sqrt{2\pi}}\left( e^{\frac{-(u-v)^2}{2}}- e^{\frac{-(u+v)^2}{2}}\right).
\label{eq:DensityBrownianMeander}
\end{align}
for every $u,v > 0$. 
\end{itemize}
\label{prop:DoneyResult}
\end{proposition}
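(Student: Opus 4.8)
The statement to prove is Proposition~\ref{prop:DoneyResult}, which is cited directly as Proposition~18 of Doney~\cite{Doney2012}. Since the excerpt explicitly attributes the result to Doney and poses it "for our setting," the plan is primarily to explain how one reduces the statement to the form proved in~\cite{Doney2012} and to indicate the analytic ingredients behind each regime.

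\textbf{Plan of proof.} The backbone is the Wiener--Hopf style factorization of the event $\{S_n \in [y,y+\Delta), T_0 > n\}$ into a contribution from a late descending ladder epoch near time $0$ and a mirror contribution near time $n$, combined with the local central limit theorem~\eqref{eq:RandomWalkConvergenceProperties}. First I would recall that under Assumption~\ref{ass:Increments} the strict ascending and descending ladder height renewal functions $U(\cdot)$ and $V(\cdot)$ are well defined, that $U(t)\sim c_U t$ and $V(t)\sim c_V t$ as $t\to\infty$ by the renewal theorem (with the product of the two ladder-epoch means fixed by the finite variance normalization), and that by duality the walk started at $x$ and killed at $T_0$ relates to the walk started at $0$ and conditioned on the descending ladder structure. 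The main decomposition is: condition on the time $m$ of the first entrance to a neighbourhood of the terminal level (or, symmetrically, split the path at a middle time), use independence of increments to write $\Prob(S_n \in [y,y+\Delta), T_0>n | S_0 = x)$ as a convolution of (a) the probability the walk from $x$ survives and is at some intermediate level, and (b) a reversed-walk local probability of landing in $[y,y+\Delta)$ while staying positive. Each of the four cases (i)--(iii) then corresponds to which of $x/\sqrt n$, $y/\sqrt n$ are bounded away from $0$ and $\infty$.

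\textbf{Case-by-case reduction.} For (i), with both $x/\sqrt n \to 0$ and $y/\sqrt n \to 0$, the walk is in the "small deviation" regime at both ends; here one applies the classical result (Caravenna, Vatutin--Wachtel, or Doney) that $\Prob(S_n \in dz, T_0 > n | S_0 = x) \sim \frac{V(x) U(z)}{\sqrt{2\pi}\, n^{3/2}} dz$ uniformly, and integrating over $z \in [y,y+\Delta)$ gives~\eqref{eq:Doney1}. For (ii), exactly one endpoint scales like $\sqrt n$: if $y \asymp \sqrt n$ and $x = o(\sqrt n)$, the path from $0$ behaves like a meander that ends near a macroscopic level, contributing the $V(x)$ factor and a Gaussian weight $e^{-y^2/(2n)}$ together with the asymptotic slope $\Exp(-S_{T_0})$ coming from $U(w)\sim \Exp(-S_{T_0})\, w$ averaged against the local limit density — this yields~\eqref{eq:Doney2}; the reversed-time argument, using $\tilde S$ and $\Exp(-\tilde S_{T_0})$, gives~\eqref{eq:Doney3}. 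For (iii), both endpoints are macroscopic, so Donsker's invariance principle for the random walk bridge conditioned to stay positive — together with the local CLT to upgrade weak convergence to a local statement, which is exactly where the boundedness of $f_{n_0}$ in Assumption~\ref{ass:Increments} is used — identifies the limit with the transition density $q(x/\sqrt n, y/\sqrt n)$ of the positive Brownian meander/bridge, whose explicit reflection form is~\eqref{eq:DensityBrownianMeander}. The uniformity in $\Delta$ on compacts is automatic from the uniform local limit theorem.

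\textbf{Main obstacle.} The genuinely delicate point is the \emph{uniformity} of the asymptotics as the endpoints pass from the $o(\sqrt n)$ scale to the $\Theta(\sqrt n)$ scale: one must verify that the three regimes match up along any sequence with, say, $x/\sqrt n \to 0$ but slowly, so that the statements in (i) and (ii)/(iii) are mutually consistent and no regime is "missed." This is handled in Doney's proof by a careful truncation of the ladder-height sum at an intermediate scale and a dominated-convergence argument controlling the tail of $U$ and $V$ against the Gaussian factor; reproducing it here would require the renewal estimates $U(t) = \Exp(-S_{T_0})\,t(1+o(1))$ with an effective error term and a uniform bound $\Prob(S_n \in [y,y+\Delta), T_0 > n | S_0 = x) \le C \Delta\, V(x)/(\sqrt n\, (1+y^2/n)) \cdot$ (a slowly varying correction), which follows from the local CLT plus monotonicity of $V$. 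Since all of this is carried out in~\cite{Doney2012}, it suffices to cite that reference; the only adaptation needed for the present paper is to observe that $S_0 = x$ is allowed to depend on $n$ and that $y = y_n$ may vary, both of which are already covered by the uniform formulation there.
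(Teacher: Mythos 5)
The paper states Proposition~\ref{prop:DoneyResult} as a result cited directly from Doney~\cite{Doney2012} and does not supply a proof of its own, so your approach of deferring to that reference with an explanatory sketch matches the paper in substance. Your sketch is broadly faithful to the ladder-decomposition and local-limit-theorem ideas behind Doney's argument, with one small slip: by~\eqref{eq:AssUandV} and~\eqref{eq:ProductExpectationsLadderHeightsVariance} the renewal slope is $U(w)\sim w/\Exp(-\tilde S_{T_0}) = 2\Exp(-S_{T_0})\,w$, not $\Exp(-S_{T_0})\,w$.
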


The asymptotic behaviors of $V(\cdot)$ and $U(\cdot)$ are quite well-understood: the functions are both non-decreasing functions and regularly varying with exponent one. In particular, as $t\rightarrow \infty$,
\begin{align}
\begin{array}{ll}
U(t) \sim t/\Exp(-\tilde{S}_{T_0}), & V(t) \sim t/\Exp(-{{S}}_{T_0}).
\end{array}
\label{eq:AssUandV}
\end{align}
Moreover, for all random walks with finite variance $\sigma^2=1$ it holds that
\begin{align}
\Exp(-\tilde{S}_{T_0}) \Exp(-{{S}}_{T_0}) = \frac{\sigma^2}{2} = \frac{1}{2}.
\label{eq:ProductExpectationsLadderHeightsVariance}
\end{align}

The goal is to exploit Proposition~\ref{prop:DoneyResult} to derive the asymptotic behavior of the random walk at time $k$, while staying above the moving boundary. Intuitively, we derive this by looking at the position of the random walk at time $(1-\epsilon)k$, where $\epsilon \in (0,1)$ satisfies~\eqref{eq:AdditionalAssumption}. Due to the additional assumption on $\epsilon$, one can replace the boundary between $(1-\epsilon)k$ and $k$ by the constant boundary $g_k$. The density is then derived using~\eqref{eq:ConditionedRWDensity},~\eqref{eq:ConditionedRWTauStoppingTime} and the result of Doney~\cite{Doney2012} with constant boundaries. This strategy yields the following result.

\begin{proposition}
Let $t \geq g_k$ with $t-g_k=\Omega(|g_k|)$ and $(t-g_k)\rightarrow \infty$ as $k \rightarrow \infty$. Then, uniformly as $k \rightarrow \infty$,
\begin{align*}
\frac{\Prob\left(S_k \in dt ; \tau_g > k \right)}{dt} &\sim \sqrt{\frac{2}{\pi}} \frac{L_g(k)}{k^{3/2}} \cdot \left\{ \begin{array}{ll}
\Exp\left(-\tilde{S}_{T_0} \right) U(t-g_k) & \textrm{if } t=o(\sqrt{k}),\\
t e^{-\frac{t^2}{2k}} & \textrm{if } t = \Omega(\sqrt{k}).\\
\end{array}\right.
\end{align*}
\label{prop:DensityRWConditionedAtTimeK}
\end{proposition}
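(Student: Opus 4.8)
The plan is to use hypothesis~\eqref{eq:AdditionalAssumption} to reduce the moving boundary, near the threshold, to a constant one and then invoke Proposition~\ref{prop:DoneyResult}. For all sufficiently small $\epsilon\in(0,1)$ set $m:=(1-\epsilon)k$ and $\ell:=k-m=\epsilon k$, and condition on the state at time $m$:
\begin{align*}
\Prob\bigl(S_k\in[t,t+\Delta);\tau_g>k\bigr)=\int_{g_m}^{\infty}\Prob(S_m\in ds;\tau_g>m)\;\Prob\bigl(S_i>g_i\ \forall\,i\in(m,k],\ S_k\in[t,t+\Delta)\mid S_m=s\bigr).
\end{align*}
By~\eqref{eq:AdditionalAssumption} the boundary on $(m,k]$ lies between the constant levels $g_k^{\pm}:=g_k\pm\alpha(\epsilon)|g_k|$, so the inner probability is sandwiched between the two constant-boundary quantities; shifting the post-$m$ walk down by $g_k^{\pm}$ turns these into $\Prob\bigl(S_\ell\in[y,y+\Delta),\,T_0>\ell\mid S_0=x\bigr)$ with $x=s-g_k^{\pm}$ and $y=t-g_k^{\pm}$, precisely the objects governed by Proposition~\ref{prop:DoneyResult} (note $y\ge 0$ and $y\to\infty$ once $\alpha(\epsilon)$ is below the $\Omega$-constant of $t-g_k$). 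Hence, up to the $g_k^{\pm}$-sandwich,
\begin{align*}
\Prob\bigl(S_k\in[t,t+\Delta);\tau_g>k\bigr)=\Prob(\tau_g>m)\;\Exp\!\bigl[H_{t,\Delta}^{\pm}(S_m)\mid\tau_g>m\bigr],\qquad H_{t,\Delta}^{\pm}(s):=\Prob\bigl(S_\ell\in[t-g_k^{\pm},t-g_k^{\pm}+\Delta),\,T_0>\ell\mid S_0=s-g_k^{\pm}\bigr).
\end{align*}

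Next I would pass to the limit. By~\eqref{eq:ConditionedRWDensity} the law of $(S_m-g_m)/\sqrt m$ conditioned on $\tau_g>m$ converges weakly to the Rayleigh density $v\mapsto ve^{-v^2/2}$, and by~\eqref{eq:ConditionedRWTauStoppingTime} with slow variation $\Prob(\tau_g>m)\sim\sqrt{2/\pi}\,L_g(m)/\sqrt m\sim\sqrt{2/\pi}\,L_g(k)/\sqrt m$. Writing the bulk starting point as $x=v\sqrt m$ (so $x/\sqrt\ell\asymp v/\sqrt\epsilon$, bounded for $v$ in a compact subset of $(0,\infty)$), and using $|g_k^{\pm}|=o(\sqrt k)$, one checks $y/\sqrt\ell\to0$ when $t=o(\sqrt k)$ and $y/\sqrt\ell$ bounded away from $0,\infty$ when $t=\Theta(\sqrt k)$; accordingly~\eqref{eq:Doney3} (with factor $\tfrac{x}{\ell}e^{-x^2/(2\ell)}$ and $U(y)$) applies in the first case and~\eqref{eq:Doney4} (the explicit meander density $q$) in the second. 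Substituting these into $\Exp[H_{t,\Delta}^{\pm}(S_m)\mid\tau_g>m]$ and integrating against $ve^{-v^2/2}$ reduces to elementary Gaussian integrals — $\int_0^\infty v^2e^{-v^2k/(2\ell)}\,dv$ in case one and $\int_0^\infty v\,e^{-k(v-v^\star)^2/(2\ell)}\,dv$ in case two — which generate exactly the factors $k^{-3/2}$ and $\sqrt{2/\pi}$ and leave $\Exp(-\tilde S_{T_0})U(t-g_k^{\pm})$, respectively $(t-g_k^{\pm})e^{-(t-g_k^{\pm})^2/(2k)}\sim te^{-t^2/(2k)}$ (here the reflection term in $q$ washes out after the $v$-integration); the regime $t=\omega(\sqrt k)$, which is outside Proposition~\ref{prop:DoneyResult}, is handled by a direct local-CLT estimate for $\Prob(S_\ell\in[y,y+\Delta),T_0>\ell\mid S_0=x)$.

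What remains is to control the atypical values of $S_m$ and remove $\epsilon$. When $s-g_k^{\pm}=o(\sqrt\ell)$ one invokes~\eqref{eq:Doney1}--\eqref{eq:Doney2} together with the vanishing of the Rayleigh density at the origin, and when $s-g_k^{\pm}=\omega(\sqrt\ell)$ one uses $\Prob(S_\ell\in[y,y+\Delta),T_0>\ell\mid S_0=x)\le\Prob(S_\ell\in[y,y+\Delta)\mid S_0=x)\lesssim\Delta\,\ell^{-1/2}e^{-(x-y)^2/(2\ell)}$ against the Gaussian tail of the conditioned endpoint; each is shown to be a vanishing fraction of the main term as the truncation is refined, uniformly in $t$. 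Finally I let $\epsilon\downarrow0$: since $\alpha(\epsilon)\to0$ one has $U(t-g_k^{\pm})/U(t-g_k)\to1$ (regular variation and monotonicity of $U$), $(t-g_k^{\pm})e^{-(t-g_k^{\pm})^2/(2k)}\sim(t-g_k)e^{-(t-g_k)^2/(2k)}$, and $L_g(m)/L_g(k)\to1$, so the $\epsilon$-dependent upper and lower bounds collapse onto the asserted equivalence, which is uniform in $t$ because every step is. The main obstacle is the interchange in the second paragraph: combining the weak (Rayleigh) limit of the conditioned endpoint with the $k$-dependent Doney asymptotics of $H_{t,\Delta}^{\pm}$ — i.e.\ establishing the right uniform integrability of the normalised $H_{t,\Delta}^{\pm}(S_m)$, uniformly in $t$ — and gluing the several Doney regimes with the correct constants.
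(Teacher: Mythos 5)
Your sketch matches the paper's own proof: conditioning at time $(1-\epsilon)k$, sandwiching the moving boundary on $[(1-\epsilon)k,k]$ by the constants $(1\pm\alpha(\epsilon))g_k$, invoking the Rayleigh limit~\eqref{eq:ConditionedRWDensity} for the conditioned endpoint together with Doney's local estimates (Proposition~\ref{prop:DoneyResult}), and finally letting $\epsilon\downarrow 0$. The interchange/uniform-integrability issue you flag at the end is exactly the technical core, and the paper handles it via a $\Delta$-grid discretisation of the $v$-integral (passing to Riemann sums and letting $\Delta\downarrow 0$) together with Lemmas~\ref{lem:SmallValuesVSmallT} and~\ref{lem:SmallValuesVLargeT}, which bound the contribution from starting points within $x_\epsilon\sqrt{k}$ of the boundary by an $O(x_\epsilon)$ fraction of the main term.
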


For the proof of Proposition~\ref{prop:DensityRWConditionedAtTimeK} we use the two following lemmas that show that it is unlikely for the random walk to be close to its boundary at time $(1-\epsilon)k$. 

\begin{lemma}
Suppose $t=o(\sqrt{k})$ such that $t-g_k=\Omega(|g_k|)$ and $(t-g_k)\rightarrow \infty$ as $k \rightarrow \infty$. Let $\epsilon \in (0,1)$ be such that~\eqref{eq:AdditionalAssumption} is satisfied. Choose $x_\epsilon>0$ small enough such that
\begin{align}
1-e^{-\frac{-{x_\epsilon}^2}{2(1-\epsilon)}} <\epsilon^{3/2}
\label{eq:XEpsilonCondition}
\end{align}
holds, and define
\begin{align}
g_{k,\epsilon}^+=\left\{ \begin{array}{ll}
(1+\alpha(\epsilon))g_k & \textrm{if } g_k<0,\\
(1-\alpha(\epsilon))g_k & \textrm{if } g_k\geq 0.
\end{array} \right.
\label{eq:GkepsPlusDefinition}
\end{align}
Then there exist a constant $C_1 \in (0,\infty)$ such that for all $\epsilon \in (0,1)$,
\begin{align*}
\limsup_{k \rightarrow \infty} \frac{k^{3/2}}{L_g(k) U(t-g_k)} \int_{v= g_{(1-\epsilon)k} }^{g_{(1-\epsilon)k} +{x_\epsilon} \sqrt{k}} \Prob\left(S_{(1-\epsilon)k} \in dv ; \tau_g > (1-\epsilon)k \right) &\Prob\left(S_{\epsilon k} \in dt ; T_{g_{k,\epsilon}^+} > \epsilon k | S_0=v \right)  \\
&\leq C_1 \frac{x_\epsilon}{\sqrt{(1-\epsilon)}} \, dt.
\end{align*}
\label{lem:SmallValuesVSmallT}
\end{lemma}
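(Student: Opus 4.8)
The plan is to bound the inner integral by factoring it through the position at the intermediate time $(1-\epsilon)k$, using that the boundary on $[(1-\epsilon)k,k]$ may (by~\eqref{eq:AdditionalAssumption}) be replaced by the constant value $g_{k,\epsilon}^+$, and then to estimate the two factors separately: the first factor via the conditioned-random-walk results~\eqref{eq:ConditionedRWDensity} and~\eqref{eq:ConditionedRWTauStoppingTime}, the second via Doney's Proposition~\ref{prop:DoneyResult}. First I would note that, since $t=o(\sqrt{k})$ and the relevant starting points $v$ lie in $[g_{(1-\epsilon)k}, g_{(1-\epsilon)k}+x_\epsilon\sqrt{k}]$, after shifting by the constant boundary $g_{k,\epsilon}^+$ both the shifted starting point $v-g_{k,\epsilon}^+$ and the shifted endpoint $t-g_{k,\epsilon}^+$ are $o(\sqrt{\epsilon k})$; hence regime~(i) of Proposition~\ref{prop:DoneyResult} applies on the block of length $\epsilon k$, giving
\begin{align*}
\frac{\Prob\left(S_{\epsilon k}\in dt; T_{g_{k,\epsilon}^+}>\epsilon k\mid S_0=v\right)}{dt}\sim \frac{V(v-g_{k,\epsilon}^+)\,U(t-g_{k,\epsilon}^+)}{\sqrt{2\pi}\,(\epsilon k)^{3/2}},
\end{align*}
uniformly in the relevant range. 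Since $\alpha(\epsilon)<1$ and $t-g_k=\Omega(|g_k|)$, one has $U(t-g_{k,\epsilon}^+)\asymp U(t-g_k)$ up to a constant depending only on $\epsilon$ through $\alpha(\epsilon)$, and $V(v-g_{k,\epsilon}^+)=O(V(x_\epsilon\sqrt k))=O(x_\epsilon\sqrt k)$ by~\eqref{eq:AssUandV} and monotonicity of $V$; this is where the factor $x_\epsilon/\sqrt{1-\epsilon}$ will ultimately come from.

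Next I would handle the first factor. Integrating the Doney estimate, the quantity to bound becomes, up to constants,
\begin{align*}
\frac{k^{3/2}}{L_g(k)U(t-g_k)}\cdot\frac{1}{(\epsilon k)^{3/2}}\int_{v=g_{(1-\epsilon)k}}^{g_{(1-\epsilon)k}+x_\epsilon\sqrt k} V(v-g_{k,\epsilon}^+)\,\Prob\left(S_{(1-\epsilon)k}\in dv;\tau_g>(1-\epsilon)k\right).
\end{align*}
Bounding $V(v-g_{k,\epsilon}^+)$ by its value at the right endpoint, which is $\sim x_\epsilon\sqrt k/\Exp(-S_{T_0})$, and using
\begin{align*}
\int_{v=g_{(1-\epsilon)k}}^{g_{(1-\epsilon)k}+x_\epsilon\sqrt k}\Prob\left(S_{(1-\epsilon)k}\in dv;\tau_g>(1-\epsilon)k\right)=\Prob\left(\tau_g>(1-\epsilon)k\right)\Prob\left(\tfrac{S_{(1-\epsilon)k}-g_{(1-\epsilon)k}}{\sqrt{(1-\epsilon)k}}\le \tfrac{x_\epsilon}{\sqrt{1-\epsilon}}\;\middle|\;\tau_g>(1-\epsilon)k\right),
\end{align*}
I apply~\eqref{eq:ConditionedRWDensity} to see the conditional probability is $\sim 1-e^{-x_\epsilon^2/(2(1-\epsilon))}$, which by the choice~\eqref{eq:XEpsilonCondition} is at most $\epsilon^{3/2}$, and~\eqref{eq:ConditionedRWTauStoppingTime} together with slow variation to see $\Prob(\tau_g>(1-\epsilon)k)\sim \sqrt{2/\pi}\,L_g(k)/\sqrt{(1-\epsilon)k}$. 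Collecting the powers of $k$: the prefactor $k^{3/2}$, the $(\epsilon k)^{-3/2}$, the $x_\epsilon\sqrt k$ from $V$, the $k^{-1/2}$ from $\Prob(\tau_g>(1-\epsilon)k)$, and the cancellation of $L_g(k)$ and $U(t-g_k)$ all combine so that every power of $k$ cancels, leaving a bound of the form $C\cdot x_\epsilon\cdot\epsilon^{3/2}\cdot\epsilon^{-3/2}\cdot(1-\epsilon)^{-1/2}=C\,x_\epsilon/\sqrt{1-\epsilon}$ as claimed, with $C=C_1$ absolute.

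The main obstacle is making the replacement of the moving boundary on $[(1-\epsilon)k,k]$ by the constant $g_{k,\epsilon}^+$ rigorous while simultaneously keeping the Doney asymptotics uniform: one must sandwich $\tau_g>k$ between the events $\{T_{g_{k,\epsilon}^-}>\epsilon k\}$ and $\{T_{g_{k,\epsilon}^+}>\epsilon k\}$ for the shifted walk (with $g_{k,\epsilon}^-$ the analogous lower envelope), check that~\eqref{eq:AdditionalAssumption} indeed traps the boundary between these on the whole block, and verify that the "uniformly" in Proposition~\ref{prop:DoneyResult}(i) covers the full range of $v$ and $t$ appearing here uniformly in $k$ — in particular that $t-g_{k,\epsilon}^+$ can still tend to infinity and stay $o(\sqrt{\epsilon k})$, which uses $t-g_k=\Omega(|g_k|)$ and $\alpha(\epsilon)<1$. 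A secondary technical point is that the asymptotic equivalences from Doney and from~\eqref{eq:ConditionedRWDensity} hold uniformly, so that after integrating in $v$ the $1+o(1)$ errors remain $o(1)$ uniformly and can be absorbed before taking $\limsup_{k\to\infty}$; I expect this to be routine once the uniformity statements are pinned down.
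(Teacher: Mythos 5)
Your overall strategy and the final bookkeeping are exactly those of the paper, but there is a genuine gap in the justification of the Doney estimate: regime~(i) of Proposition~\ref{prop:DoneyResult} requires both the shifted starting point and the shifted endpoint to be $o(\sqrt{\epsilon k})$, and this fails on part of the range of $v$. Since $x_\epsilon$ is a fixed positive constant (depending on $\epsilon$ but not on $k$), near the right endpoint $v=g_{(1-\epsilon)k}+x_\epsilon\sqrt{k}$ one has $v-g_{k,\epsilon}^+\sim x_\epsilon\sqrt{k}=\Theta(\sqrt{\epsilon k})$, so the hypothesis $\max\{(v-g_{k,\epsilon}^+)/\sqrt{\epsilon k},(t-g_{k,\epsilon}^+)/\sqrt{\epsilon k}\}\to 0$ is violated there and~\eqref{eq:Doney1} is not an asymptotic. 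Your statement that both shifted quantities are $o(\sqrt{\epsilon k})$ throughout the range of $v$ is therefore incorrect.

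The paper resolves this by splitting the supremum over $v\in[g_{(1-\epsilon)k},g_{(1-\epsilon)k}+x_\epsilon\sqrt{k}]$ into $v=o(\sqrt{k})$, where~\eqref{eq:Doney1} applies, and $v=\Theta(\sqrt{k})$, where instead~\eqref{eq:Doney3} gives
\begin{align*}
\frac{\Prob\left(S_{\epsilon k}\in dt;\,T_{g_{k,\epsilon}^+}>\epsilon k\mid S_0=v\right)}{dt}\sim\sqrt{\tfrac{2}{\pi}}\,\Exp(-\tilde S_{T_0})\,\frac{U(t-g_{k,\epsilon}^+)}{(\epsilon k)^{3/2}}\,(v-g_{k,\epsilon}^+)\,e^{-\frac{(v-g_{k,\epsilon}^+)^2}{2\epsilon k}}\,.
\end{align*}
After bounding the exponential by $1$ and using~\eqref{eq:AssUandV} and~\eqref{eq:ProductExpectationsLadderHeightsVariance}, this gives a bound of the same order $O\bigl(x_\epsilon\sqrt{k}\cdot U(t-g_k)/(\epsilon k)^{3/2}\bigr)$ as your regime~(i) computation, so your final constant and powers of $k$ and $\epsilon$ are in fact correct — the issue is purely that you cite the wrong part of Proposition~\ref{prop:DoneyResult} for $v=\Theta(\sqrt{k})$ and would need this second case to make the bound rigorous.
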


\begin{lemma}
Suppose $t=\Theta(\sqrt{k})$ such that $t\geq g_k$. Let $\epsilon \in (0,1)$ be such that~\eqref{eq:AdditionalAssumption} holds, and choose $x_\epsilon$ small enough such that~\eqref{eq:XEpsilonCondition} is satisfied. Define $g_{k,\epsilon}^+$ as in~\eqref{eq:GkepsPlusDefinition}. Then there exist a constant $C_2 \in (0,\infty)$ such that for all $\epsilon \in (0,1)$,
\begin{align*}
\limsup_{k \rightarrow \infty} \frac{k^{3/2}}{L_g(k) t e^{-t^2/(2k)}} \int_{v= g_{(1-\epsilon)k} }^{g_{(1-\epsilon)k} +{x_\epsilon} \sqrt{k}} \Prob\left(S_{(1-\epsilon)k} \in dv ; \tau_g > (1-\epsilon)k \right) &\Prob\left(S_{\epsilon k} \in dt ; T_{g_{k,\epsilon}^+} > \epsilon k | S_0=v \right)  \\
&\leq C_2 \frac{x_\epsilon}{\sqrt{(1-\epsilon)}} \, dt.
\end{align*}
\label{lem:SmallValuesVLargeT}
\end{lemma}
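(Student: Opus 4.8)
The plan is to bound the integral by controlling the two factors separately and then recombining. Since $t=\Theta(\sqrt k)$, write $t = \rho\sqrt k$ with $\rho$ bounded away from $0$ and $\infty$; the target density profile is $t e^{-t^2/(2k)} = \rho\sqrt k\, e^{-\rho^2/2}$. For the inner probability $\Prob(S_{\epsilon k}\in dt;\, T_{g_{k,\epsilon}^+}>\epsilon k\mid S_0=v)$ with $v\in[g_{(1-\epsilon)k},\, g_{(1-\epsilon)k}+x_\epsilon\sqrt k]$, I would first shift coordinates so the constant boundary $g_{k,\epsilon}^+$ becomes $0$: because of~\eqref{eq:AdditionalAssumption}, $g_{(1-\epsilon)k}$ differs from $g_k$ by at most $\alpha(\epsilon)|g_k|$, and $|g_k|=o(\sqrt k)$, so the recentred starting point $v-g_{k,\epsilon}^+$ lies in an interval of the form $[o(\sqrt{k}),\, x_\epsilon\sqrt k + o(\sqrt k)]$, i.e. it is $O(\sqrt{\epsilon k})$ times a small constant relative to $\sqrt{\epsilon k}$, while the recentred endpoint $t-g_{k,\epsilon}^+ = \rho\sqrt k (1+o(1)) = \Theta(\sqrt{\epsilon k})$ sits in a compact set $[D^{-1},D]\cdot\sqrt{\epsilon k}$ after dividing by $\sqrt{\epsilon k}$. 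Hence Proposition~\ref{prop:DoneyResult}(ii), specifically~\eqref{eq:Doney3} (the regime $y/\sqrt n \to 0$, $x/\sqrt n$ in a compact set, applied with $n=\epsilon k$), gives
\begin{align*}
\Prob\left(S_{\epsilon k}\in dt;\, T_{g_{k,\epsilon}^+}>\epsilon k\mid S_0=v\right) \lesssim \sqrt{\frac{2}{\pi}}\,\frac{\Exp(-\tilde S_{T_0})\, U(t-g_{k,\epsilon}^+)\, dt}{\sqrt{\epsilon k}}\cdot \frac{v-g_{k,\epsilon}^+}{\epsilon k}\, e^{-\frac{(v-g_{k,\epsilon}^+)^2}{2\epsilon k}},
\end{align*}
uniformly over the relevant $v$ and $t$; using~\eqref{eq:AssUandV} and $t-g_{k,\epsilon}^+ = \Theta(\sqrt{k})$ one replaces $U(t-g_{k,\epsilon}^+)$ by $(t-g_{k,\epsilon}^+)/\Exp(-\tilde S_{T_0}) = t(1+o(1))/\Exp(-\tilde S_{T_0})$, and the $e^{-(v-g_{k,\epsilon}^+)^2/(2\epsilon k)}$ factor can be bounded by $1$ (or kept; it only helps). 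This turns the inner factor into a bound of order $\frac{t}{\epsilon k}\cdot\frac{v-g_{k,\epsilon}^+}{\sqrt{\epsilon k}}\, dt$ up to constants, where I should also note $v-g_{k,\epsilon}^+ \le x_\epsilon\sqrt k + \alpha(\epsilon)|g_k| = (x_\epsilon + o(1))\sqrt k$.

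Next I substitute this bound back into the integral and pull out everything not depending on $v$:
\begin{align*}
\int_{v=g_{(1-\epsilon)k}}^{g_{(1-\epsilon)k}+x_\epsilon\sqrt k} \Prob\left(S_{(1-\epsilon)k}\in dv;\,\tau_g>(1-\epsilon)k\right)\,\Prob\left(S_{\epsilon k}\in dt; T_{g_{k,\epsilon}^+}>\epsilon k\mid S_0=v\right) \lesssim \frac{C\, t\, dt}{(\epsilon k)^{3/2}}\int (v-g_{k,\epsilon}^+)\,\Prob\left(S_{(1-\epsilon)k}\in dv;\,\tau_g>(1-\epsilon)k\right).
\end{align*}
The remaining one-dimensional integral over $v\in[g_{(1-\epsilon)k},\, g_{(1-\epsilon)k}+x_\epsilon\sqrt k]$ of $(v-g_{k,\epsilon}^+)$ against $\Prob(S_{(1-\epsilon)k}\in dv;\,\tau_g>(1-\epsilon)k)$ is where the smallness in $x_\epsilon$ must be extracted. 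I would bound $v-g_{k,\epsilon}^+ \le (v - g_{(1-\epsilon)k}) + |g_{(1-\epsilon)k}-g_{k,\epsilon}^+| \le x_\epsilon\sqrt k + 2\alpha(\epsilon)|g_k| = (x_\epsilon + o(1))\sqrt{k}$, so the integral is at most $(x_\epsilon+o(1))\sqrt k\cdot\Prob\big(g_{(1-\epsilon)k}\le S_{(1-\epsilon)k}\le g_{(1-\epsilon)k}+x_\epsilon\sqrt k,\ \tau_g>(1-\epsilon)k\big)$. Now factor $\Prob(\tau_g>(1-\epsilon)k)$ out and apply the Rayleigh limit~\eqref{eq:ConditionedRWDensity}: the conditional probability that $(S_{(1-\epsilon)k}-g_{(1-\epsilon)k})/\sqrt{(1-\epsilon)k}$ lies in $[0,\, x_\epsilon/\sqrt{1-\epsilon}]$ converges to $1 - e^{-x_\epsilon^2/(2(1-\epsilon))}$, which by the choice~\eqref{eq:XEpsilonCondition} of $x_\epsilon$ is $<\epsilon^{3/2}$ but — more importantly for the stated bound — is $O(x_\epsilon^2/(1-\epsilon))$, hence certainly $\le \mathrm{const}\cdot x_\epsilon/\sqrt{1-\epsilon}$ for the range of interest. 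Finally use~\eqref{eq:ConditionedRWTauStoppingTime}, $\Prob(\tau_g>(1-\epsilon)k)\sim\sqrt{2/\pi}\,L_g((1-\epsilon)k)/\sqrt{(1-\epsilon)k}$, together with $L_g((1-\epsilon)k)\sim L_g(k)$ (slow variation) and $\epsilon k \sim \epsilon k$, and collect the powers of $k$: the product of $t\,dt/(\epsilon k)^{3/2}$, the factor $(x_\epsilon+o(1))\sqrt k$, the $\sqrt k$ from the substitution, and $L_g(k)/\sqrt{(1-\epsilon)k}$ assembles into $L_g(k)\, t e^{-t^2/(2k)}\, k^{-3/2}$ times a constant multiple of $x_\epsilon/\sqrt{1-\epsilon}$ (absorbing the $\epsilon$-powers and $e^{-\rho^2/2}=e^{-t^2/(2k)}$ that reappears from writing $t=\Theta(\sqrt k)$ — note here $e^{-t^2/(2k)}$ is bounded above and below by positive constants since $t=\Theta(\sqrt k)$, so it may be freely inserted). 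Dividing by $L_g(k)\, t e^{-t^2/(2k)}\, k^{-3/2}$ as in the statement leaves exactly a constant times $x_\epsilon/\sqrt{1-\epsilon}\, dt$, giving $C_2$.

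The main obstacle I anticipate is the uniformity bookkeeping: Proposition~\ref{prop:DoneyResult}(ii) is stated as an asymptotic equivalence uniform in the stated regimes, but here the ``time'' parameter is $\epsilon k$ with $\epsilon$ fixed and $k\to\infty$, the starting point $v$ ranges over a $k$-dependent interval, and one must check that $(v-g_{k,\epsilon}^+)/\sqrt{\epsilon k}$ genuinely stays in a fixed compact set $[D^{-1},D]$ — which requires $v-g_{k,\epsilon}^+$ to be bounded below by a constant times $\sqrt k$, NOT guaranteed for $v$ near the left endpoint $g_{(1-\epsilon)k}$. This forces a case split: for $v$ with $v-g_{k,\epsilon}^+ = o(\sqrt k)$ one must instead invoke~\eqref{eq:Doney1} (both endpoints $o(\sqrt{\epsilon k})$) and bound $U(t-g_{k,\epsilon}^+)\asymp t$, $V(v-g_{k,\epsilon}^+)\lesssim v-g_{k,\epsilon}^+$, reaching the same form; and one must verify the two regimes glue without loss. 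The second delicate point is that~\eqref{eq:ConditionedRWDensity} is pointwise in $v$, so to control $\Prob(g_{(1-\epsilon)k}\le S_{(1-\epsilon)k}\le g_{(1-\epsilon)k}+x_\epsilon\sqrt k\mid\tau_g>(1-\epsilon)k)$ one needs it at the two endpoints $v=0$ and $v=x_\epsilon/\sqrt{1-\epsilon}$ and subtracts — legitimate since the limit $e^{-v^2/2}$ is continuous and monotone; I would phrase this carefully. Everything else is routine substitution and slow-variation estimates.
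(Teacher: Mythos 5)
Your overall strategy is the same as the paper's: bound the inner first-passage density uniformly over the window $v\in[g_{(1-\epsilon)k},g_{(1-\epsilon)k}+x_\epsilon\sqrt k]$ via Doney's local limit theorems, extract a factor of order $x_\epsilon\sqrt k\cdot t e^{-t^2/(2k)}(\epsilon k)^{-3/2}$, and multiply by the mass of the window controlled through \eqref{eq:ConditionedRWDensity}, \eqref{eq:ConditionedRWTauStoppingTime} and the choice of $x_\epsilon$. However, two steps as written are genuinely wrong. First, the cases of Proposition~\ref{prop:DoneyResult} you invoke do not apply here: in this lemma the terminal point satisfies $t-g_{k,\epsilon}^+=\Theta(\sqrt{\epsilon k})$, never $o(\sqrt{\epsilon k})$, so \eqref{eq:Doney3} (which requires $y/\sqrt n\to0$) and, in your proposed case split, \eqref{eq:Doney1} (which requires both points to be $o(\sqrt n)$) are both outside their regimes of validity — you appear to be carrying over the structure of Lemma~\ref{lem:SmallValuesVSmallT}, where $t=o(\sqrt k)$. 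The correct inputs are \eqref{eq:Doney2} when $v-g_{k,\epsilon}^+=o(\sqrt k)$, giving a density $\asymp V(v-g_{k,\epsilon}^+)\,(t-g_{k,\epsilon}^+)e^{-(t-g_{k,\epsilon}^+)^2/(2\epsilon k)}(\epsilon k)^{-3/2}$, and the meander density \eqref{eq:Doney4} when $v-g_{k,\epsilon}^+=\Theta(\sqrt k)$, where a Taylor expansion of the difference of Gaussians produces the factor $vt/(\epsilon k)$. Your formula happens to have the right order because $U(y)\asymp y$ and $V(x)\asymp x$ at scale $\sqrt k$, but the pointwise bound is not justified by the results you cite; moreover $V(x)\lesssim x$ is false for bounded $x$ (since $V(0)>0$), and the correct step is monotonicity, $\sup_v V(v-g_{k,\epsilon}^+)\le V(x_\epsilon\sqrt k+o(\sqrt k))\sim x_\epsilon\sqrt k/\Exp(-S_{T_0})$, keeping the factor $e^{-t^2/(2k)}$ coming from the terminal exponential rather than reinserting it at a $t$-dependent cost.

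Second, and more substantively, your final bookkeeping misplaces the source of the required uniformity in $\epsilon$. The inner bound carries $\epsilon^{-3/2}$, and the lemma asserts a single constant $C_2$ valid for all $\epsilon\in(0,1)$; the only mechanism that cancels the $\epsilon^{-3/2}$ is the defining property \eqref{eq:XEpsilonCondition} of $x_\epsilon$, which makes the window mass at most $\epsilon^{3/2}\sqrt{2/\pi}\,L_g(k)/\sqrt{(1-\epsilon)k}$. You mention this bound but declare the estimate $1-e^{-x_\epsilon^2/(2(1-\epsilon))}\le \mathrm{const}\cdot x_\epsilon/\sqrt{1-\epsilon}$ to be "more important"; following that route the assembled bound retains an uncancelled $\epsilon^{-3/2}$ (so at best one gets a bound of order $x_\epsilon^2\epsilon^{-3/2}$, which is merely bounded and does not vanish as $\epsilon\downarrow0$, whereas the application in Proposition~\ref{prop:DensityRWConditionedAtTimeK} needs this contribution to disappear in the limit $\epsilon\downarrow0$), and "absorbing the $\epsilon$-powers" into the constant contradicts the uniformity in $\epsilon$ demanded by the statement. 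Your power count also lists an extra $\sqrt k$ "from the substitution" that double counts; the correct tally, with $t$ kept symbolic, is $k^{-3/2}\cdot k^{1/2}\cdot k^{-1/2}=k^{-3/2}$, matching the target. The repair is exactly the paper's assembly: $\sup_v$ of the inner density $\le c\,x_\epsilon\,t e^{-t^2/(2k)}\epsilon^{-3/2}k^{-1}$, times window mass $\le\epsilon^{3/2}\sqrt{2/\pi}\,L_g(k)(1-\epsilon)^{-1/2}k^{-1/2}$, which yields $C_2\,x_\epsilon/\sqrt{1-\epsilon}$ with $C_2$ independent of $\epsilon$.
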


\noindent
The proofs of Lemma~\ref{lem:SmallValuesVSmallT} and~\ref{lem:SmallValuesVLargeT} are given in Appendix~\ref{app:ProofOfLemmas}. Next, we will prove Proposition~\ref{prop:DensityRWConditionedAtTimeK}.

\begin{proof}[Proof of Proposition~\ref{prop:DensityRWConditionedAtTimeK}]
We consider the position of the random walk at time $(1-\epsilon)k$ with $\epsilon \in (0,1)$. Specifically, fix $\epsilon>0$ such that~\eqref{eq:AdditionalAssumption} holds and, additionally,
\begin{align}
\alpha(\epsilon) < \lim_{k \rightarrow \infty} \frac{t-g_k}{|g_k|}
\label{eq:AddExtraAssumptionOnEpsilon}
\end{align}
is satisfied. Note that
\begin{align*}
\Prob&\left(S_k \in dt ; \tau_g > k \right) = \int_{v=g_{(1-\epsilon)k}}^\infty \Prob\left(S_k \in dt; \tau_g > k ; S_{(1-\epsilon)k} \in dv \right)\\
&\hspace{1cm}= \int_{v=g_{(1-\epsilon)k}}^\infty \Prob\left(S_k \in dt ; \tau_g > k | S_{(1-\epsilon)k} = v ; \tau_g> (1-\epsilon)k \right)\Prob\left( S_{(1-\epsilon)k} \in dv ; \tau_g > (1-\epsilon) k \right) \\
&\hspace{1cm}=\int_{v=g_{(1-\epsilon)k}}^\infty \Prob\left(S_{\epsilon k} \in dt ; \tau_{\breve{g}^\epsilon} > \epsilon k | S_0=v \right)\Prob\left( S_{(1-\epsilon)k} \in dv | \tau_g > (1-\epsilon) k \right) \Prob\left( \tau_g > (1-\epsilon) k \right) ,
\end{align*}
where $\breve{g}^\epsilon, i\geq 1$ is defined as
\begin{align*}
\breve{g}^\epsilon_j := g_{(1-\epsilon)k+j}, \hspace{1cm} \forall 1 \leq j \leq \epsilon k.
\end{align*}
Observe that~\eqref{eq:AdditionalAssumption} is then equivalent to
\begin{align*}
\sup_{1 \leq j \leq k} \left|\frac{\breve{g}^\epsilon_j}{g_k}-1\right| \leq \alpha(\epsilon).
\end{align*}

\noindent
Define $g_{k,\epsilon}^+$ as in~\eqref{eq:GkepsPlusDefinition} and let 
\begin{align}
g_{k,\epsilon}^-=\left\{ \begin{array}{ll}
(1-\alpha(\epsilon))g_k & \textrm{if } g_k<0,\\
(1+\alpha(\epsilon))g_k & \textrm{if } g_k\geq 0.
\end{array} \right.
\label{eq:GkepsMinusDefinition}
\end{align}
We obtain the bounds
\begin{align}
\Prob\left(S_k \in dt ; \tau_g > k \right) \leq \int_{v=g_{(1-\epsilon)k}}^\infty \Prob\left(S_{(1-\epsilon)k} \in dv ; \tau_g > (1-\epsilon)k \right) \Prob\left(S_{\epsilon k} \in dt ; T_{g_{k,\epsilon}^+} > \epsilon k | S_0=v \right),
\label{eq:UBDensityAtK}
\end{align}
and
\begin{align}
\Prob\left(S_k \in dt ; \tau_g > k \right) \geq \int_{v=g_{(1-\epsilon)k}}^\infty \Prob\left(S_{(1-\epsilon)k} \in dv ; \tau_g > (1-\epsilon)k \right) \Prob\left(S_{\epsilon k} \in dt ; T_{g_{k,\epsilon}^-} > \epsilon k | S_0=v \right).
\label{eq:LBDensityAtK}
\end{align}
The goal is to show that the bounds asymptotically coincide when $\epsilon \downarrow 0$.

Let $v_{\epsilon,k}=g_{(1-\epsilon)k}+x_\epsilon \sqrt{k}$, where $x_\epsilon>0$ is chosen small enough such that
\begin{align}
1-e^{-\frac{x_\epsilon^2}{2(1-\epsilon)}} < \epsilon^{3/2}.
\label{eq:HelpForSmallV}
\end{align}
Note that this choice of $x_\epsilon$ implies that $x_\epsilon/\sqrt{\epsilon(1-\epsilon)} \rightarrow 0$ as $\epsilon \downarrow 0$, since
\begin{align*}
\lim_{\epsilon \downarrow 0} \frac{x_\epsilon}{\sqrt{\epsilon(1-\epsilon)}} < \lim_{\epsilon \downarrow 0} \sqrt{\frac{-2\log(1-\epsilon^{3/2})}{\epsilon}} =0.
\end{align*}
Then, \eqref{eq:UBDensityAtK} can be written as
\begin{align*}
\Prob\left(S_k \in dt ; \tau_g > k \right) \leq \int_{v=g_{(1-\epsilon)k}}^{v_{\epsilon,k}} &\Prob\left(S_{(1-\epsilon)k} \in dv ; \tau_g > (1-\epsilon)k \right) \Prob\left(S_{\epsilon k} \in dt ; T_{g_{k,\epsilon}^+} > \epsilon k | S_0=v \right) \\
&+ \int_{v=v_{\epsilon,k}}^\infty \Prob\left(S_{(1-\epsilon)k} \in dv ; \tau_g > (1-\epsilon)k \right) \Prob\left(S_{\epsilon k} \in dt ; T_{g_{k,\epsilon}^+} > \epsilon k | S_0=v \right).
\end{align*}

\noindent
Lemma~\ref{lem:SmallValuesVSmallT} and~\ref{lem:SmallValuesVLargeT} provide a bound for the first term, i.e.~there exists $c_1,c_2 \in (0,\infty)$ such that for every $\epsilon \in (0,1)$ satisfying~\eqref{eq:AdditionalAssumption},
\begin{align}
\begin{split}
\int_{v= g_{(1-\epsilon)k} }^{v_{\epsilon,k}} &\Prob\left(S_{(1-\epsilon)k} \in dv ; \tau_g > (1-\epsilon)k \right) \Prob\left(S_{\epsilon k} \in dt ; T_{g_{k,\epsilon}^+} > \epsilon k | S_0=v \right) \\
&\hspace{4cm} \leq  \frac{L_g(k)}{k^{3/2}} \frac{x_\epsilon}{\sqrt{1-\epsilon}} \left\{\begin{array}{ll}
c_1 U(t-g_k) \, dt & \textrm{if } |t|=o(\sqrt{k}), \\
c_2 t e^{-\frac{t^2}{2k}} \, dt & \textrm{if } |t|=\Theta(\sqrt{k}). \\
\end{array}\right.
\end{split}
\label{eq:BeginPart}
\end{align}
To evaluate the second integral term, we use a similar analysis as we have done for the case where $k$ is sufficiently far from $n$. First, due to Proposition~\ref{prop:DoneyResult}, it holds uniformly as $k \rightarrow \infty$,
\begin{align}
\begin{split}
\frac{\Prob\left(S_{\epsilon k} \in dt ; T_{g_{k,\epsilon}^+} > \epsilon k | S_0=v \right)}{dt} &= \frac{\Prob\left(S_{\epsilon k} \in d(t-g_{k,\epsilon}^+) ; T_{0} > \epsilon k | S_0=v-g_{k,\epsilon}^+ \right)}{d(t-g_{k,\epsilon}^+)}\\ 
&\sim \left\{ \begin{array}{ll}
\sqrt{\frac{2}{\pi}} \Exp(-\tilde{S}_{T_0}) \frac{U(t-g_{k,\epsilon}^+)}{\sqrt{\epsilon k}} \frac{v-g_{k,\epsilon}^+}{\epsilon k} e^{-\frac{(v-g_{k,\epsilon}^+)^2}{2\epsilon k}}   &\textrm{if } t=o(\sqrt{k}),\\
\frac{1}{\sqrt{2 \pi \epsilon k }} \left( e^{-\frac{(v-t)^2}{2\epsilon k}} - e^{-\frac{(v+t-2g_{k,\epsilon}^+)^2}{2\epsilon k}} \right)  &\textrm{if } t=\Theta(\sqrt{k}).
\end{array}\right.
\end{split}
\label{eq:AssLastPart}
\end{align}
Moreover, using~\eqref{eq:ConditionedRWDensity},~\eqref{eq:ConditionedRWTauStoppingTime} and the fact that $L_g(\cdot)$ is slowly varying,
\begin{align}
\begin{split}
\Prob\left(S_{(1-\epsilon)k} \in dv ; \tau_g > (1-\epsilon)k \right) &= \Prob\left(S_{(1-\epsilon)k} \in dv | \tau_g > (1-\epsilon)k \right)\Prob\left(\tau_g > (1-\epsilon)k \right)\\
&\sim \sqrt{\frac{2}{\pi}} \frac{L_g(k)}{\sqrt{(1-\epsilon)k}} \frac{v-g_{(1-\epsilon)k}}{(1-\epsilon)k} e^{-\frac{(v-g_{(1-\epsilon)k})^2}{2(1-\epsilon)k}} \, dv.
\end{split}
\label{eq:AssFirstPart}
\end{align}

\noindent
These asymptotic results can be used to evaluate the second term. For readability of the proof, we complete the proof separately for the two cases stated in the proposition, i.e.~for $t=o(\sqrt{k})$ and $t=\Theta(\sqrt{k})$. Fix $\Delta<x_\epsilon/2$ and recall
\begin{align*}
\Delta \mathbb{N}_0 = \{ \Delta i : i \geq 0, i\in \mathbb{Z}\}.
\end{align*}
In the case $t=o(\sqrt{k})$, using~\eqref{eq:AssFirstPart} and~\eqref{eq:AssLastPart}, we find for every fixed $0 < \Delta < x_\epsilon/2$,
\begin{align*}
\limsup_{k \rightarrow \infty} &\frac{k^{3/2}}{L_g(k) U(t-g_{k, \epsilon}^+)}  \int_{v=v_{\epsilon,k}}^\infty \Prob\left(S_{(1-\epsilon)k} \in dv ; \tau_g > (1-\epsilon)k \right) \frac{\Prob\left(S_{\epsilon k} \in dt ; T_{g_{k,\epsilon}^+} > \epsilon k | S_0=v \right)}{dt}\\
&\leq \limsup_{k \rightarrow \infty} \frac{k^{3/2}}{L_g(k) U(t-g_{k, \epsilon}^+)}  \left(\sum_{z \in \Delta \mathbb{N}_0} \Prob\left(S_{(1-\epsilon)k} \in [v_{\epsilon,k} +z\sqrt{k},v_{\epsilon,k} +(z+\Delta)\sqrt{k}) ; \tau_g > (1-\epsilon)k \right) \right.\\
&\hspace{5cm} \left.\cdot \sup_{v \in [v_{\epsilon,k} +z\sqrt{k},v_{\epsilon,k} +(z+\Delta)\sqrt{k})} \frac{\Prob\left(S_{\epsilon k} \in dt ; T_{g_{k,\epsilon}^+} > \epsilon k | S_0=v \right)}{dt} \right)\\
&\leq \limsup_{k \rightarrow \infty} \frac{k^{3/2}}{L_g(k) U(t-g_{k, \epsilon}^+)} \left( \sum_{z \in \Delta \mathbb{N}_0} \int_{v=v_{\epsilon,k}+z\sqrt{k}}^{v_{\epsilon,k}+(z+\Delta)\sqrt{k}} \sqrt{\frac{2}{\pi}} \frac{L_g(k)}{\sqrt{(1-\epsilon)k}} \frac{v-g_{(1-\epsilon)k}}{(1-\epsilon)k} e^{-\frac{(v-g_{(1-\epsilon)k})^2}{2(1-\epsilon)k}} \, dv \right.\\
&\hspace{3cm} \left.\cdot \sqrt{\frac{2}{\pi}} \Exp(-\tilde{S}_{T_0}) \frac{U(t-g_{k,\epsilon}^+)}{\sqrt{\epsilon k}} \frac{v_{\epsilon,k}+(z+\Delta)\sqrt{k}-g_{k,\epsilon}^+}{\epsilon k} e^{-\frac{(v_{\epsilon,k}+z\sqrt{k}-g_{k,\epsilon}^+)^2}{2\epsilon k}} \right)\\
&\leq \frac{2}{\pi} \frac{\Exp(-\tilde{S}_{T_0})}{\sqrt{\epsilon(1-\epsilon)}}  \int_{y=x_\epsilon}^\infty \frac{y}{(1-\epsilon)} e^{-\frac{y^2}{2(1-\epsilon)}} \frac{y+2\Delta}{\epsilon } e^{-\frac{(y-2\Delta)^2}{2\epsilon }} \, dy,
\end{align*}
where the final inequality is due to variable substitution, and since for every $v \in [v_{\epsilon,k}+z\sqrt{k},v_{\epsilon,k}+(z+\Delta)\sqrt{k}]$,
\begin{align*}
v_{\epsilon,k}+(z+\Delta)\sqrt{k}-g_{k,\epsilon}^+ \leq v-g_{(1-\epsilon)k}+2\Delta\sqrt{k}
\end{align*}
and 
\begin{align*}
v_{\epsilon,k}+z\sqrt{k}-g_{k,\epsilon}^+ \geq v-g_{(1-\epsilon)k}-2\Delta\sqrt{k}
\end{align*}
for $k$ large enough. Letting $\Delta \downarrow 0$ and invoking Lemma~\ref{lem:Integral1VSmall} yields
\begin{align*}
\limsup_{k \rightarrow \infty} \frac{k^{3/2}}{L_g(k) U(t-g_{k, \epsilon}^+)} & \int_{v=v_{\epsilon,k}}^\infty \Prob\left(S_{(1-\epsilon)k} \in dv ; \tau_g > (1-\epsilon)k \right) \frac{\Prob\left(S_{\epsilon k} \in dt ; T_{g_{k,\epsilon}^+} > \epsilon k | S_0=v \right)}{dt}\\
&\leq \lim_{\Delta \downarrow 0} \left(\frac{2}{\pi} \Exp(-\tilde{S}_{T_0}) \frac{1}{\sqrt{\epsilon(1-\epsilon)}} \int_{y=0}^\infty \frac{y(y+2\Delta)}{\epsilon(1-\epsilon)} e^{-\frac{y^2}{2(1-\epsilon)}-\frac{(y-2\Delta)^2}{2\epsilon }} \, dy \right) = \sqrt{\frac{2}{\pi}} \Exp(-\tilde{S}_{T_0}) .
\end{align*}
Recall~\eqref{eq:GkepsPlusDefinition} and~\eqref{eq:AssUandV}. Hence,
\begin{align*}
\limsup_{k \rightarrow \infty} \frac{U(t-g_{k,\epsilon}^+) }{U(t-g_k)} \leq 1+\alpha(\epsilon) \limsup_{k \rightarrow \infty} \frac{|g_k|}{t-g_k} < \infty.
\end{align*}
Combining this with~\eqref{eq:BeginPart}, we obtain the upper bound
\begin{align*}
\limsup_{k \rightarrow \infty}  \frac{k^{3/2}}{L_g(k)U(t-g_k)}& \frac{\Prob\left(S_k \in dt ; \tau_g > k \right)}{dt} \\
&\leq \lim_{\epsilon \downarrow 0} \left(c_1 \frac{x_\epsilon}{\sqrt{1-\epsilon}} + \sqrt{\frac{2}{\pi}} \Exp(-\tilde{S}_{T_0})\left(1+\alpha(\epsilon) \limsup_{k \rightarrow \infty} \frac{|g_k|}{t-g_k}\right) \right) = \sqrt{\frac{2}{\pi}} \Exp(-\tilde{S}_{T_0}).
\end{align*}

\noindent
Similarly, using~\eqref{eq:AssLastPart} and~\eqref{eq:AssFirstPart}, we obtain a lower bound for every fixed $0 < \Delta < x_\epsilon/2$,
\begin{align*}
\liminf_{k \rightarrow \infty} &\frac{k^{3/2}}{L_g(k) U(t-g_{k, \epsilon}^-)}  \int_{v=v_{\epsilon,k}}^\infty \Prob\left(S_{(1-\epsilon)k} \in dv ; \tau_g > (1-\epsilon)k \right) \frac{\Prob\left(S_{\epsilon k} \in dt ; T_{g_{k,\epsilon}^-} > \epsilon k | S_0=v \right)}{dt}\\
&\geq \liminf_{k \rightarrow \infty} \left(\frac{k^{3/2}}{L_g(k) U(t-g_{k, \epsilon}^-)} \sum_{z \in \Delta \mathbb{N}_0} \Prob\left(S_{(1-\epsilon)k} \in [v_{\epsilon,k} +z\sqrt{k},v_{\epsilon,k} +(z+\Delta)\sqrt{k}) ; \tau_g > (1-\epsilon)k \right) \right.\\
&\hspace{5cm} \left. \cdot \inf_{v \in [v_{\epsilon,k} +z\sqrt{k},v_{\epsilon,k} +(z+\Delta)\sqrt{k})} \frac{\Prob\left(S_{\epsilon k} \in dt ; T_{g_{k,\epsilon}^-} > \epsilon k | S_0=v \right)}{dt} \right)\\
&\geq \liminf_{k \rightarrow \infty} \left(\frac{k^{3/2}}{L_g(k) U(t-g_{k, \epsilon}^-)} \sum_{z \in \Delta \mathbb{N}_0} \int_{v=v_{\epsilon,k}+z\sqrt{k}}^{v_{\epsilon,k}+(z+\Delta)\sqrt{k}} \sqrt{\frac{2}{\pi}} \frac{L_g(k)}{\sqrt{(1-\epsilon)k}} \frac{v-g_{(1-\epsilon)k}}{(1-\epsilon)k} e^{-\frac{(v-g_{(1-\epsilon)k})^2}{2(1-\epsilon)k}} \, dv \right.\\
&\hspace{5cm} \left. \cdot \sqrt{\frac{2}{\pi}} \Exp(-\tilde{S}_{T_0}) \frac{U(t-g_{k,\epsilon}^-)}{\sqrt{\epsilon k}} \frac{v_{\epsilon,k}+z\sqrt{k}-g_{k,\epsilon}^-}{\epsilon k} e^{-\frac{(v_{\epsilon,k}+(z+\Delta)\sqrt{k}-g_{k,\epsilon}^-)^2}{2\epsilon k}} \right)\\
&\geq \frac{2}{\pi}  \Exp(-\tilde{S}_{T_0})\frac{1}{\sqrt{(1-\epsilon)\epsilon} } \int_{y=x_\epsilon}^\infty \frac{y}{(1-\epsilon)} e^{-\frac{y^2}{2(1-\epsilon)}} \frac{y-2\Delta}{\epsilon } e^{-\frac{(y+2\Delta)^2}{2\epsilon k}} \, dy.
\end{align*}
Invoking Lemma~\ref{lem:Integral1VSmall} yields
\begin{align*}
\liminf_{k \rightarrow \infty} &\frac{k^{3/2}}{L_g(k) U(t-g_{k, \epsilon}^-)}  \int_{v=v_{\epsilon,k}}^\infty \Prob\left(S_{(1-\epsilon)k} \in dv ; \tau_g > (1-\epsilon)k \right) \frac{\Prob\left(S_{\epsilon k} \in dt ; T_{g_{k,\epsilon}^-} > \epsilon k | S_0=v \right)}{dt}\\
&\geq \lim_{\Delta \downarrow 0} \left( \frac{2}{\pi}  \Exp(-\tilde{S}_{T_0})\frac{1}{\sqrt{(1-\epsilon)\epsilon} } \int_{y=x_\epsilon}^\infty \frac{y}{(1-\epsilon)} e^{-\frac{y^2}{2(1-\epsilon)}} \frac{y-2\Delta}{\epsilon } e^{-\frac{(y+2\Delta)^2}{2\epsilon k}} \, dy \right)\\
&= \frac{2}{\pi}  \Exp(-\tilde{S}_{T_0}) \left( \frac{x_\epsilon}{\sqrt{\epsilon(1-\epsilon)}} e^{-\frac{x_\epsilon}{2\epsilon(1-\epsilon)}} + \int_{y=x_\epsilon/\sqrt{\epsilon(1-\epsilon)}}^\infty e^{-y^2/2} \, dy \right).
\end{align*}
Recall that the choice of $x_\epsilon$ ensures that $x_\epsilon/\sqrt{\epsilon(1-\epsilon)} \rightarrow 0$ as $\epsilon \downarrow 0$. Moreover, we observe that due to \eqref{eq:AddExtraAssumptionOnEpsilon}, \eqref{eq:GkepsMinusDefinition} and~\eqref{eq:AssUandV},
\begin{align*}
\liminf_{k \rightarrow \infty} \frac{U(t-g_{k,\epsilon}^-) }{U(t-g_k)} \geq 1-\alpha(\epsilon) \liminf_{k \rightarrow \infty} \frac{|g_k|}{t-g_k}.
\end{align*}
Therefore,
\begin{align*}
& \liminf_{k \rightarrow \infty} \frac{k^{3/2}}{L_g(k)U(t-g_k)} \frac{\Prob\left(S_k \in dt ; \tau_g > k \right)}{dt} \\
&\;\;\; \geq \frac{2}{\pi}\Exp(-\tilde{S}_{T_0}) \lim_{\epsilon \downarrow 0} \left(1-\alpha(\epsilon) \liminf_{k \rightarrow \infty} \frac{|g_k|}{t-g_k} \right) \left( \frac{x_\epsilon}{\sqrt{\epsilon(1-\epsilon)}} e^{-\frac{x_\epsilon}{2\epsilon(1-\epsilon)}}+ \int_{y=\frac{x_\epsilon}{\sqrt{\epsilon(1-\epsilon)}}}^\infty e^{-y^2/2} \, dy\right)= \sqrt{\frac{2}{\pi}}\Exp(-\tilde{S}_{T_0}).
\end{align*}
Note this coincides with the upper bound, proving the result in case of $t=o(\sqrt{k})$.

Next, we complete the proof in case of $t=\Theta(\sqrt{k})$. Again, using~\eqref{eq:AssLastPart} and~\eqref{eq:AssFirstPart}, we obtain for any $0<\Delta <\min\{x_\epsilon,\lim_{k \rightarrow \infty} t/\sqrt{k}\}/3$,
\begin{align*}
\limsup_{k \rightarrow \infty}  & \frac{k^{3/2}}{L_g(k)t e^{-t^2/(2k)}}  \int_{v=v_{\epsilon,k}}^\infty \Prob\left(S_{(1-\epsilon)k} \in dv ; \tau_g > (1-\epsilon)k \right) \frac{\Prob\left(S_{\epsilon k} \in dt ; T_{g_{k,\epsilon}^+} > \epsilon k | S_0=v \right)}{dt}\\
&\leq \limsup_{k \rightarrow \infty}  \frac{k^{3/2}}{L_g(k)t e^{-t^2/(2k)}}  \sum_{z \in \Delta \mathbb{N}_0} \Prob\left(S_{(1-\epsilon)k} \in [v_{\epsilon,k} +z\sqrt{k},v_{\epsilon,k} +(z+\Delta)\sqrt{k}) ; \tau_g > (1-\epsilon)k \right)\\
&\hspace{5cm} \cdot \sup_{v \in [v_{\epsilon,k} +z\sqrt{k},v_{\epsilon,k} +(z+\Delta)\sqrt{k})} \frac{\Prob\left(S_{\epsilon k} \in dt ; T_{g_{k,\epsilon}^+} > \epsilon k | S_0=v \right)}{dt}\\
&\leq \limsup_{k \rightarrow \infty}  \frac{k^{3/2}}{L_g(k)t e^{-t^2/(2k)}}  \sum_{z \in \Delta \mathbb{N}_0} \int_{v=v_{\epsilon,k}+z\sqrt{k}}^{v_{\epsilon,k}+(z+\Delta)\sqrt{k}} \sqrt{\frac{2}{\pi}} \frac{L_g(k)}{\sqrt{(1-\epsilon)k}} \frac{v-g_{(1-\epsilon)k}}{(1-\epsilon)k} e^{-\frac{(v-g_{(1-\epsilon)k})^2}{2(1-\epsilon)k}} \, dv\\
&\hspace{5cm} \cdot \sup_{v \in [v_{\epsilon,k} +z\sqrt{k},v_{\epsilon,k} +(z+\Delta)\sqrt{k})} \frac{1}{\sqrt{2 \pi \epsilon k }} \left( e^{-\frac{(v-t)^2}{2\epsilon k}} - e^{-\frac{(v+t-2g_{k,\epsilon}^+)^2}{2\epsilon k}} \right).
\end{align*}
Since for any $a\leq b$,
\begin{align*}
\sup_{v \in [a,b]} e^{-\frac{(v-t)^2}{2\epsilon k}} =\left\{ \begin{array}{ll}
e^{-\frac{(b-t)^2}{2\epsilon k}} & \textrm{if } b \leq t,\\
e^{-\frac{(a-t)^2}{2\epsilon k}} & \textrm{if } a \geq t,\\
1 & \textrm{othterwise,} \\
 \end{array}\right.
\end{align*}
we obtain 
\begin{align*}
\limsup_{k \rightarrow \infty}  & \frac{k^{3/2}}{L_g(k)t e^{-t^2/(2k)}}  \int_{v=v_{\epsilon,k}}^\infty \Prob\left(S_{(1-\epsilon)k} \in dv ; \tau_g > (1-\epsilon)k \right) \frac{\Prob\left(S_{\epsilon k} \in dt ; T_{g_{k,\epsilon}^+} > \epsilon k | S_0=v \right)}{dt}\\
&\leq \limsup_{k \rightarrow \infty} \frac{1}{\pi}   \frac{k^{1/2}}{t e^{-t^2/(2k)} \sqrt{\epsilon(1-\epsilon)}}  \left( \int_{y=x_\epsilon}^{t/\sqrt{k}-3\Delta} \frac{y}{(1-\epsilon)} e^{-\frac{y^2}{2(1-\epsilon)}} \left( e^{-\frac{(y+3\Delta-t/\sqrt{k})^2}{2\epsilon }} - e^{-\frac{(y+3\Delta+t/\sqrt{k})^2}{2\epsilon }} \right)\, dy \right.\\
&\hspace{4.25cm} +\left. \int_{y=t/\sqrt{k}+3\Delta}^\infty \frac{y}{(1-\epsilon)} e^{-\frac{y^2}{2(1-\epsilon)}} \left( e^{-\frac{(y-3\Delta-t/\sqrt{k})^2}{2\epsilon }} - e^{-\frac{(y+3\Delta+t/\sqrt{k})^2}{2\epsilon }} \right)\, dy \right.\\
&\hspace{5cm} +\left. \int_{y=t/\sqrt{k}-3\Delta}^{t/\sqrt{k}+3\Delta} \frac{y}{(1-\epsilon)} e^{-\frac{y^2}{2(1-\epsilon)}} \left( 1 - e^{-\frac{(y+3\Delta+t/\sqrt{k})^2}{2\epsilon}} \right)\, dy \right).
\end{align*}
Since $t=\Theta(\sqrt{k})$,
\begin{align*}
\limsup_{k \rightarrow \infty}  & \frac{k^{3/2}}{L_g(k)t e^{-t^2/(2k)}}  \int_{v=v_{\epsilon,k}}^\infty \Prob\left(S_{(1-\epsilon)k} \in dv ; \tau_g > (1-\epsilon)k \right) \frac{\Prob\left(S_{\epsilon k} \in dt ; T_{g_{k,\epsilon}^+} > \epsilon k | S_0=v \right)}{dt}\\
&\leq \lim_{\Delta \downarrow 0} \limsup_{k \rightarrow \infty} \frac{k^{3/2}}{L_g(k)t e^{-t^2/(2k)}}  \left( \int_{y=x_\epsilon}^{t/\sqrt{k}-3\Delta} \frac{y}{(1-\epsilon)} e^{-\frac{y^2}{2(1-\epsilon)}} \left( e^{-\frac{(y+3\Delta-t/\sqrt{k})^2}{2\epsilon }} - e^{-\frac{(y+3\Delta+t/\sqrt{k})^2}{2\epsilon }} \right)\, dy \right.\\
&\hspace{4.75cm} +\left. \int_{y=t/\sqrt{k}+3\Delta}^\infty \frac{y}{(1-\epsilon)} e^{-\frac{y^2}{2(1-\epsilon)}} \left( e^{-\frac{(y-3\Delta-t/\sqrt{k})^2}{2\epsilon }} - e^{-\frac{(y+3\Delta+t/\sqrt{k})^2}{2\epsilon }} \right)\, dy \right.\\
&\hspace{5.5cm} +\left. \int_{y=t/\sqrt{k}-3\Delta}^{t/\sqrt{k}+3\Delta} \frac{y}{(1-\epsilon)} e^{-\frac{y^2}{2(1-\epsilon)}} \left( 1 - e^{-\frac{(y+3\Delta+t/\sqrt{k})^2}{2\epsilon}} \right)\, dy \right)\\
&= \lim_{k \rightarrow \infty} \frac{1}{\pi}   \frac{k^{1/2}}{t e^{-t^2/(2k)} \sqrt{\epsilon(1-\epsilon)}} \cdot \int_{y=x_\epsilon}^\infty \frac{y}{(1-\epsilon)} e^{-\frac{y^2}{2(1-\epsilon)}} \left( e^{-\frac{(y-\lim_{k \rightarrow \infty} t/\sqrt{k})^2}{2\epsilon }} - e^{-\frac{(y+\lim_{k \rightarrow \infty} t/\sqrt{k})^2}{2\epsilon }} \right)  \, dy
\end{align*}
with $x_\epsilon>0$. Invoking Lemma~\ref{lem:Integral2VLarge} concludes
\begin{align*}
\limsup_{k \rightarrow \infty}  & \frac{k^{3/2}}{L_g(k)t e^{-t^2/(2k)}}  \int_{v=v_{\epsilon,k}}^\infty \Prob\left(S_{(1-\epsilon)k} \in dv ; \tau_g > (1-\epsilon)k \right) \frac{\Prob\left(S_{\epsilon k} \in dt ; T_{g_{k,\epsilon}^+} > \epsilon k | S_0=v \right)}{dt}\\
&\leq\limsup_{k \rightarrow \infty} \frac{1}{\pi}   \frac{k^{1/2}}{t e^{-t^2/(2k)} \sqrt{\epsilon(1-\epsilon)}}  \int_{y=0}^\infty \frac{y}{(1-\epsilon)} e^{-\frac{y^2}{2(1-\epsilon)}} \left( e^{-\frac{(y-\lim_{k \rightarrow \infty} t/\sqrt{k})^2}{2\epsilon }} - e^{-\frac{(y+\lim_{k \rightarrow \infty} t/\sqrt{k})^2}{2\epsilon }} \right)  \, dy\\
&=\frac{1}{\pi} \cdot \sqrt{2\pi}=\sqrt{\frac{2}{\pi}}.
\end{align*}
Combining this expression with~\eqref{eq:BeginPart} and letting $\epsilon \downarrow 0$, we obtain the upper bound
\begin{align*}
\limsup_{k \rightarrow \infty}  & \frac{k^{3/2}}{L_g(k)t e^{-t^2/(2k)}} \frac{\Prob(S_k \in dt ; \tau_g>k)}{dt} \leq \sqrt{\frac{2}{\pi}}.
\end{align*}

\noindent
For the lower bound, using~\eqref{eq:AssLastPart} and~\eqref{eq:AssFirstPart}, we obtain for any $0<\Delta <\min\{x_\epsilon,\lim_{k \rightarrow \infty} t/\sqrt{k}\}/3$,
\begin{align*}
\liminf_{k \rightarrow \infty}  & \frac{k^{3/2}}{L_g(k)t e^{-t^2/(2k)}} \int_{v=v_{\epsilon,k}}^\infty \Prob\left(S_{(1-\epsilon)k} \in dv ; \tau_g > (1-\epsilon)k \right) \frac{\Prob\left(S_{\epsilon k} \in dt ; T_{g_{k,\epsilon}^-} > \epsilon k | S_0=v \right)}{dt}\\
&\geq \liminf_{k \rightarrow \infty}  \frac{k^{3/2}}{L_g(k)t e^{-t^2/(2k)}} \sum_{z \in \Delta \mathbb{N}_0} \Prob\left(S_{(1-\epsilon)k} \in [v_{\epsilon,k} +z\sqrt{k},v_{\epsilon,k} +(z+\Delta)\sqrt{k}) ; \tau_g > (1-\epsilon)k \right)\\
&\hspace{5cm} \cdot \inf_{v \in [v_{\epsilon,k} +z\sqrt{k},v_{\epsilon,k} +(z+\Delta)\sqrt{k})} \frac{\Prob\left(S_{\epsilon k} \in dt ; T_{g_{k,\epsilon}^-} > \epsilon k | S_0=v \right)}{dt}\\
&\geq \liminf_{k \rightarrow \infty}  \frac{1}{\pi} \frac{k^{1/2}}{t e^{-t^2/(2k)}\sqrt{\epsilon(1-\epsilon)}}  \sum_{z \in \Delta \mathbb{N}_0} \int_{v=v_{\epsilon,k}+z\sqrt{k}}^{v_{\epsilon,k}+(z+\Delta)\sqrt{k}}  \frac{v-g_{(1-\epsilon)k}}{(1-\epsilon)k} e^{-\frac{(v-g_{(1-\epsilon)k})^2}{2(1-\epsilon)k}} \, dv\\
&\hspace{5cm} \cdot \inf_{v \in [v_{\epsilon,k} +z\sqrt{k},v_{\epsilon,k} +(z+\Delta)\sqrt{k})} \left( e^{-\frac{(v-t)^2}{2\epsilon k}} - e^{-\frac{(v+t-2g_{k,\epsilon}^-)^2}{2\epsilon k}} \right) \\
&\geq \liminf_{k \rightarrow \infty}  \frac{1}{\pi} \frac{k^{1/2}}{t e^{-t^2/(2k)}\sqrt{\epsilon(1-\epsilon)}}  \left( \int_{y=x_\epsilon}^{t/\sqrt{k}-3\Delta} \frac{y}{(1-\epsilon)} e^{-\frac{y^2}{2(1-\epsilon)}} \left( e^{-\frac{(y-3\Delta- t/\sqrt{k})^2}{2\epsilon }} - e^{-\frac{(y-3\Delta+ t/\sqrt{k})^2}{2\epsilon }} \right)\, dy \right.\\
&\hspace{4.5cm} +\left. \int_{y=t/\sqrt{k}+3\Delta}^\infty \frac{y}{(1-\epsilon)} e^{-\frac{y^2}{2(1-\epsilon)}} \left( e^{-\frac{(y+3\Delta-t/\sqrt{k})^2}{2\epsilon}} - e^{-\frac{(y-3\Delta+t/\sqrt{k})^2}{2\epsilon }} \right)\, dy \right).
\end{align*}
We observe that the limiting expression tends to a constant as $k \rightarrow \infty$. Then, setting $\Delta \downarrow 0$ and invoking Lemma~\ref{lem:Integral2VLarge} yields
\begin{align*}
\liminf_{k\rightarrow\infty} \frac{k^{3/2}}{L_g(k)t e^{-t^2/(2k)}} \frac{\Prob\left(S_k \in dt ; \tau_g > k \right)}{dt} \geq \frac{1}{\pi}\left(\sqrt{2\pi} - \frac{c_3 \pi}{\sqrt{\epsilon(1-\epsilon)}}\left( 1-e^{-\frac{x_\epsilon^2}{2(1-\epsilon)}}\right) \right) \geq \sqrt{\frac{2}{\pi}} -c_3 \frac{\epsilon}{\sqrt{1-\epsilon}}
\end{align*}
for some constant $c_3>0$. Finally, we observe that as $\epsilon \downarrow 0$ the upper and lower bound coincode, concluding the proof for $t=\Theta(\sqrt{k})$.
\end{proof}

\subsection{Proof of Theorem~\ref{thm:MainResultBoundary}}
Recall that $\tilde{S}_m$, $m\geq 1$ denotes the reversed random walk defined in~\eqref{eq:DefinitionReversedRW}, and $\tilde{f}_m(\cdot)$ the corresponding density function at time $m$. Then,
\begin{align*}
\Prob\left(\tau_g > k | S_n = 0  \right) &= \int_{u=g_k}^\infty \Prob\left(\tau_g > k ; S_k \in du | S_n = 0  \right) = \frac{1}{f_n(0)} \int_{u=g_k}^\infty  \Prob\left(S_k \in du ; \tau_g > k \right) \tilde{f}_{n-k}(u).
\end{align*}

\noindent
Since the reversed random walk also satisfies property~\eqref{eq:RandomWalkConvergenceProperties}, 
\begin{align*}
\Prob\left(\tau_g > k | S_n = 0  \right) &\sim \frac{1}{f_n(0)\sqrt{2\pi(n-k)}} \int_{u=g_k}^\infty  \Prob\left(S_k \in du ; \tau_g > k \right) e^{-\frac{u^2}{2(n-k)}} \\
&\sim \sqrt{\frac{n}{n-k}} \int_{u=g_k}^\infty  \Prob\left(S_k \in du ; \tau_g > k \right) e^{-\frac{u^2}{2(n-k)}}.
\end{align*}
It is clear from the above identity that one may use Proposition~\ref{prop:DensityRWConditionedAtTimeK} to obtain the main result. There appears a technical difficulty for the evaluation of the integral within $o(|g_k|)$ distance from the boundary, as Proposition~\ref{prop:DensityRWConditionedAtTimeK} does not provide the asymptotic behavior for these values. Alternatively, we provide an appropriate bound at this interval that is is obtained by using the following lemma.

\begin{lemma}
Suppose $x_k = o(\sqrt{k})$ is such that $x_k=\Omega(|g_k|)$ and $x_k \rightarrow \infty$ as $k \rightarrow \infty$. Then, 
\begin{align*}
\Prob\left( S_k \leq g_k+x_k ; \tau_g>k  \right) \leq (1+o(1))\frac{x_k^2}{\sqrt{2\pi}}  \frac{L_g(k)}{k^{3/2}}.
\end{align*}
\label{lem:CloseToBoundary}
\end{lemma}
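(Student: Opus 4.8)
The plan is to reproduce, almost verbatim, the $\epsilon$‑splitting scheme from the proof of Proposition~\ref{prop:DensityRWConditionedAtTimeK}, but with the event $\{S_k\le g_k+x_k\}$ in place of $\{S_k\in dt\}$. Fix $\epsilon\in(0,1)$ small enough that~\eqref{eq:AdditionalAssumption} holds and $\alpha(\epsilon)<1$, condition on the position $S_{(1-\epsilon)k}$ at time $(1-\epsilon)k$, and use that $g_{k,\epsilon}^+\le\breve g^\epsilon_j$ for all $1\le j\le\epsilon k$ (with $g_{k,\epsilon}^+$ as in~\eqref{eq:GkepsPlusDefinition}) together with the Markov property to obtain the upper bound
\[
\Prob\left(S_k\le g_k+x_k;\tau_g>k\right)\le\int_{v=g_{(1-\epsilon)k}}^{\infty}\Prob\left(S_{(1-\epsilon)k}\in dv;\tau_g>(1-\epsilon)k\right)\,\Prob\left(S_{\epsilon k}\le\widehat x;\,T_0>\epsilon k\ \middle|\ S_0=v-g_{k,\epsilon}^+\right),
\]
where $\widehat x:=(g_k-g_{k,\epsilon}^+)+x_k=\alpha(\epsilon)|g_k|+x_k$. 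Since $x_k=\Omega(|g_k|)$ we have $\widehat x=x_k\bigl(1+O(\alpha(\epsilon))\bigr)$, so $\widehat x$ equals $x_k$ up to a factor tending to $1$ as $\epsilon\downarrow0$, and crucially $\widehat x=o(\sqrt k)$, placing the inner probability in a regime covered by Doney's Proposition~\ref{prop:DoneyResult}.

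For the inner probability I would integrate Proposition~\ref{prop:DoneyResult} over $w\in[0,\widehat x]$: part~(i), i.e.~\eqref{eq:Doney1}, for starting points with $(v-g_{k,\epsilon}^+)/\sqrt{\epsilon k}\to 0$, and~\eqref{eq:Doney3} for $(v-g_{k,\epsilon}^+)/\sqrt{\epsilon k}\in[D^{-1},D]$, while the far tail $(v-g_{k,\epsilon}^+)/\sqrt{\epsilon k}=\omega(1)$ is discarded using the Gaussian decay of the density of $S_{(1-\epsilon)k}$ given $\tau_g>(1-\epsilon)k$ coming from~\eqref{eq:ConditionedRWDensity}. The only genuinely new computation relative to the $t=o(\sqrt k)$ case of the proof of Proposition~\ref{prop:DensityRWConditionedAtTimeK} is that ``$U(t-g_{k,\epsilon}^+)\,dt$'' is replaced by $\int_0^{\widehat x}U(w)\,dw$; because $U$ is bounded near the origin and $U(w)\sim w/\Exp(-\tilde S_{T_0})$ as $w\to\infty$ by~\eqref{eq:AssUandV}, and $\widehat x\to\infty$, one has $\int_0^{\widehat x}U(w)\,dw\sim\widehat x^{\,2}/(2\Exp(-\tilde S_{T_0}))$. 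Feeding this through the same $\Delta$‑grid estimate and Lemma~\ref{lem:Integral1VSmall} used there, the contribution of the bulk of the $v$‑integral is asymptotically at most $\sqrt{2/\pi}\,\Exp(-\tilde S_{T_0})\,\tfrac{L_g(k)}{k^{3/2}}\int_0^{\widehat x}U(w)\,dw\sim\widehat x^{\,2}/\sqrt{2\pi}\cdot L_g(k)k^{-3/2}$, and letting $\epsilon\downarrow0$ replaces $\widehat x^{\,2}$ by $x_k^2$.

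The main obstacle — and the reason this is stated separately — is the part of the $v$‑integral over $v\in[g_{(1-\epsilon)k},\,g_{(1-\epsilon)k}+x_\epsilon\sqrt k]$, where the Rayleigh asymptotics~\eqref{eq:ConditionedRWDensity} are unavailable, and the implicit part of the $t$‑range within $O(|g_k|)$ of $g_k$, which lies outside the scope of both Proposition~\ref{prop:DensityRWConditionedAtTimeK} and Lemma~\ref{lem:SmallValuesVSmallT}. For the $v$‑near‑boundary strip I would integrate the bound of Lemma~\ref{lem:SmallValuesVSmallT} over $t\in[g_k+\delta x_k,\,g_k+x_k]$ (legitimate there, since $t-g_k\ge\delta x_k=\Omega(|g_k|)\to\infty$), obtaining a contribution of order $(x_\epsilon/\sqrt{1-\epsilon})\,x_k^2\,L_g(k)k^{-3/2}$, negligible once $\epsilon\downarrow0$ because the choice of $x_\epsilon$ forces $x_\epsilon/\sqrt{1-\epsilon}\to0$. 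For the residual range $t\in[g_k,\,g_k+\delta x_k]$ I would bootstrap: writing $I(y):=\Prob(S_k\le g_k+y;\tau_g>k)$, integrating the density of Proposition~\ref{prop:DensityRWConditionedAtTimeK} over the annulus $g_k+\delta y<S_k\le g_k+y$ gives $I(y)=I(\delta y)+(1+o(1))\,y^2(1-\delta^2)/\sqrt{2\pi}\cdot L_g(k)k^{-3/2}$ for every fixed $\delta\in(0,1)$; iterating $N$ times along the geometric sequence $\delta^j x_k$ telescopes to $I(x_k)=(1+o(1))\,x_k^2/\sqrt{2\pi}\cdot L_g(k)k^{-3/2}+\bigl[I(\delta^N x_k)-(1+o(1))(\delta^N x_k)^2/\sqrt{2\pi}\cdot L_g(k)k^{-3/2}\bigr]$, and since the $\epsilon$‑split argument of the first two paragraphs furnishes an a priori bound $I(y)\le C\,y^2\,L_g(k)k^{-3/2}$, the bracket is $O(\delta^{2N}x_k^2\,L_g(k)k^{-3/2})$; letting first $N\to\infty$ and then recalling that $\epsilon,\delta$ were arbitrary yields the asserted bound with constant $1/\sqrt{2\pi}$. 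I expect the delicate points to be exactly the uniformity required when patching the three Doney regimes across the cutoff $v_{\epsilon,k}$ and controlling the accumulation of the $(1+o(1))$ factors along the (finite) bootstrap.
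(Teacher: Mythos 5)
Your proposal and the paper's proof take fundamentally different routes, and the paper's is far shorter. The paper never estimates the near-boundary mass directly; it complements, writing
\[
\Prob\left(S_k\le g_k+x_k;\tau_g>k\right)=\Prob\left(\tau_g>k\right)-\Prob\left(S_k>g_k+x_k;\tau_g>k\right),
\]
lower-bounds the subtracted term by integrating Proposition~\ref{prop:DensityRWConditionedAtTimeK} over $t-g_k>x_k$ (which lies entirely inside the proposition's domain, since $x_k=\Omega(|g_k|)\to\infty$), uses the observation $U(y)\Exp(-\tilde S_{T_0})\sim y\ge y\,e^{-y^2/(2k)}$ to merge the two regimes of that proposition into a single lower bound $\Prob(S_k>g_k+x_k\mid\tau_g>k)\ge(1-o(1))e^{-x_k^2/(2k)}$, and closes with~\eqref{eq:ConditionedRWTauStoppingTime} together with the elementary inequality $1-e^{-z}\le z$. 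The complementation is the whole point of the lemma: it makes the near-boundary region, which your direct approach must fight in both $v$ and $t$, simply irrelevant.

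Your route, as written, has concrete gaps. The bootstrap is circular at its base: to control the remainder $I(\delta^Nx_k)$ you appeal to an a priori bound $I(y)\le C\,y^2L_g(k)k^{-3/2}$, but establishing that bound requires overcoming precisely the near-boundary difficulties (the $v$-strip and the residual $t$-range) that the bootstrap is introduced to circumvent, and your $\epsilon$-split sketch for it leaves them open. Moreover, the residual range $t\in[g_k,g_k+\delta x_k]$ is an artifact of routing through Lemma~\ref{lem:SmallValuesVSmallT} and Proposition~\ref{prop:DensityRWConditionedAtTimeK}, both of which carry the restriction $t-g_k=\Omega(|g_k|)\to\infty$; if you instead applied part~(i) of Proposition~\ref{prop:DoneyResult} directly to the translated constant-boundary walk on $[(1-\epsilon)k,k]$, which is uniform all the way down to the endpoint $0$, the residual would vanish and no bootstrap would be needed. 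Even granting all this, letting $N\to\infty$ in the telescoping requires the $(1+o(1))$ errors from the uniform asymptotics not to accumulate, which is not automatic. In short, the strategy could likely be repaired, but it multiplies the technical difficulties rather than bypassing them, and the write-up leaves both the a priori bound and the regime-patching unresolved.
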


\begin{proof}
Note that 
\begin{align*}
\Prob\left( S_k \leq  g_k+x_k ; \tau_g>k  \right) = \Prob\left( \tau_g>k  \right) - \Prob\left( S_k >  g_k+x_k ; \tau_g>k  \right),
\end{align*}
where can determine the latter probability by using Proposition~\ref{prop:DensityRWConditionedAtTimeK}. Note that due to~\eqref{eq:AssUandV}, for any $y_k$ satisfying $y_k=\Omega(|g_k|)$ and $y_k \rightarrow \infty$ as $k \rightarrow \infty$, 
\begin{align*}
U(y_k) \Exp(-\tilde{S}_{T_0}) \sim y_k \geq y_k e^{-\frac{y_k^2}{2k}}.
\end{align*}
Therefore, Proposition~\ref{prop:DensityRWConditionedAtTimeK} yields
\begin{align*}
\Prob\left( S_k >  g_k+x_k | \tau_g>k  \right) &= \int_{t= g_k+x_k}^\infty \Prob\left( S_k \in dt | \tau_g>k  \right) \geq (1-o(1)) \int_{y=x_k}^\infty \frac{y}{k} e^{-\frac{y^2}{2k}} = (1-o(1))e^{-\frac{x_k^2}{2k}}.
\end{align*}
Recalling~\eqref{eq:ConditionedRWTauStoppingTime} and expanding $1-e^{-x_k^2/(2k)}$, we conclude that
\begin{align*}
\Prob\left( S_k \leq  g_k+x_k ; \tau_g>k  \right) \leq (1+o(1)) \sqrt{\frac{2}{\pi}} \frac{L_g(k)}{\sqrt{k}}\left(1-e^{-\frac{x_k^2}{2k}}\right) \leq (1+o(1))\frac{x_k^2}{\sqrt{2\pi}}  \frac{L_g(k)}{k^{3/2}}.
\end{align*}
\end{proof}

\noindent
Next, we prove our main result.

\begin{proof}[Proof of Theorem~\ref{thm:MainResultBoundary}]
Similar as the proof of Proposition~\ref{prop:DensityRWConditionedAtTimeK}, we will provide an appropriate upper and lower bound of our objective, and show that these converge. Fix $\delta>0$, let $g_{k,\delta}=o(\sqrt{n-k})$ be such that $g_{k,\delta} \rightarrow \infty$ as $n \rightarrow \infty$ if $|g_k|$ converges to a finite constant, $g_{k,\delta} = (1-\delta)g_k$ if $-g_k \rightarrow \infty$ as $n \rightarrow \infty$, and $g_{k,\delta} = (1+\delta)g_k$ if $g_k \rightarrow \infty$ as $n \rightarrow \infty$. Then, using~\eqref{eq:RandomWalkConvergenceProperties},
\begin{align*}
\Prob&\left(\tau_g > k | S_n = 0  \right) \leq (1+o(1)) \sqrt{\frac{n}{n-k}} \int_{u=g_k}^\infty \Prob\left(S_k \in du ; \tau_g > k \right) e^{-\frac{u^2}{2(n-k)}}  \\
&\leq (1+o(1)) \sqrt{\frac{n}{n-k}} \left(\int_{u=g_k}^{g_{k,\delta}} \Prob\left(S_k \in du ; \tau_g > k \right) e^{-\frac{u^2}{2(n-k)}} + \int_{u=g_{k,\delta}}^\infty \Prob\left(S_k \in du ; \tau_g > k \right) e^{-\frac{u^2}{2(n-k)}} \right).
\end{align*}
For the first term we obtain the following bounds. If $|g_k|$ converges to a finite constant, Lemma~\ref{lem:CloseToBoundary} yields that 
\begin{align*}
\int_{u=g_k}^{g_{k,\delta}} \Prob\left(S_k \in du ; \tau_g > k \right) e^{-\frac{u^2}{2(n-k)}} \leq \Prob\left(S_k \leq g_{k,\delta}; \tau_g > k \right) = o\left( L_g(k)\frac{n-k}{k^{3/2}} \right).
\end{align*}
If $|g_k| \rightarrow \infty$ when $n\rightarrow \infty$, then Lemma~\ref{lem:CloseToBoundary} yields
\begin{align*}
\int_{u=g_k}^{g_{k,\delta}} \Prob\left(S_k \in du ; \tau_g > k \right) e^{-\frac{u^2}{2(n-k)}} &\leq \Prob\left(S_k \leq g_{k,\delta} ; \tau_g > k \right) e^{-\frac{g_k^2(1-\delta)^2}{2(n-k)}} \leq \frac{1+o(1)}{\sqrt{2\pi}} \cdot  \delta^2 \frac{L_g(k) g_k^2}{k^{3/2}} e^{-\frac{g_k^2(1-\delta)^2}{2(n-k)}}.
\end{align*}
for all $|g_k| = O(\sqrt{n-k})$ and for all $g_k <0$ with $|g_k| =\omega(\sqrt{n-k})$. 

\noindent
For the second term, due to Proposition~\ref{prop:DensityRWConditionedAtTimeK} and~\eqref{eq:AssUandV}, we obtain
\begin{align*}
\int_{u=g_{k,\delta}}^\infty \Prob&\left(S_k \in du ; \tau_g > k \right)  e^{-\frac{u^2}{2(n-k)}} \\
&\leq (1+o(1)) \sqrt{\frac{2}{\pi}} \frac{L_g(k)}{k^{3/2}} \left(\int_{u=g_{k}}^{\infty} (u-g_k) e^{-\frac{u^2}{2(n-k)}} \, du + \int_{u=u_\star}^\infty u e^{-\frac{u^2}{2k}}e^{-\frac{u^2}{2(n-k)}} \right) \, du,
\end{align*}
where $u_\star = o(\sqrt{k})$ is large enough such that $u_\star= \omega(n-k)$. The second term is relatively small, i.e.
\begin{align*}
\int_{u=u_\star}^\infty u e^{-\frac{u^2}{2k}}e^{-\frac{u^2}{2(n-k)}} \, du = \frac{(n-k)k}{n} e^{-\frac{u_\star^2}{2} \frac{n}{k(n-k)}} = o\left( n-k \right).
\end{align*}
On the other hand, due to Lemma~\ref{lem:Integral3},
\begin{align*}
\int_{u=g_{k}}^{\infty} (u-g_k) e^{-\frac{u^2}{2(n-k)}} \, du &= \int_{y=0}^{\infty} y e^{-\frac{(y+g_k)^2}{2(n-k)}}\, dy =\left( (n-k) e^{-\frac{g_k^2}{2(n-k)}}-g_k \sqrt{n-k} \int_{x=g_k/\sqrt{n-k}}^\infty e^{-\frac{x^2}{2}} \, dx \right) \\
&\leq (1+o(1)) \left\{ \begin{array}{ll}
n-k & \textrm{if } |g_k| = o(\sqrt{n-k}) ,\\
(n-k)\gamma(|g_k|/\sqrt{n-k}) & \textrm{if } |g_k| = \Theta(\sqrt{n-k}), \\
\sqrt{2\pi} |g_k| \sqrt{n-k} & \textrm{if } |g_k| = \omega(\sqrt{n-k}) \textrm{ and } g_k<0, \\
\end{array}\right.
\end{align*}
where $\gamma(\cdot)$ is defined as in \eqref{eq:DefEtaGamma}. Hence, we obtain the following upper bounds. If $|g_k|=o(\sqrt{n-k})$,
\begin{align*}
\limsup_{n\rightarrow \infty} \frac{k}{L_g(k)\sqrt{n-k}} \Prob\left(\tau_g > k | S_n = 0  \right) \leq \sqrt{\frac{2}{\pi}}.
\end{align*}
If $|g_k| = \Theta(\sqrt{n-k)}$, define $\eta = \lim_{n \rightarrow \infty} |g_k|/\sqrt{n-k} \in (0,\infty)$. Then,
\begin{align*}
\limsup_{n\rightarrow \infty} \frac{k}{L_g(k)\sqrt{n-k}} \Prob\left(\tau_g > k | S_n = 0  \right) \leq \delta^2 \frac{\eta^2}{\sqrt{2\pi}} e^{-\frac{\eta^2(1-\delta)^2}{2}} + \sqrt{\frac{2}{\pi}} \gamma(\eta) .
\end{align*}
Finally, if $|g_k| = \omega(\sqrt{n-k)}$ and $g_k <0$, then
\begin{align*}
\limsup_{n\rightarrow \infty} \frac{k}{L_g(k)|g_k|} \Prob\left(\tau_g > k | S_n = 0  \right) \leq \sqrt{\frac{2}{\pi}} \cdot \sqrt{2\pi} = 2.
\end{align*}

\noindent
For the lower bound, using Proposition~\ref{prop:DensityRWConditionedAtTimeK}, note that
\begin{align*}
\Prob\left(\tau_g > k | S_n = 0  \right) &\geq (1-o(1)) \sqrt{\frac{n}{n-k}} \int_{u=g_{k,\delta}}^\infty \Prob\left(S_k \in du ; \tau_g > k \right) e^{-\frac{u^2}{2(n-k)}}  \\
&\geq (1-o(1)) \sqrt{\frac{2}{\pi}} \frac{L_g(k)}{k\sqrt{n-k}} \int_{y=\delta |g_k|}^{u^\star}  y e^{-\frac{(y+g_k)^2}{2(n-k)}} \, dy ,
\end{align*}
where $u^\star = o(\sqrt{k})$ is large enough such that $(u^\star+g_k) = \omega(n-k)$. Using Lemma~\ref{lem:Integral3}, we find
\begin{align*}
\int_{y=\delta |g_k|}^{u^\star}  y e^{-\frac{(y+g_k)^2}{2(n-k)}} \, dy &= (n-k)\left(e^{-\frac{(\delta|g_k|+g_k)^2}{2(n-k)}} - e^{-\frac{(u^\star+g_k)^2}{2(n-k)}}\right)-g_k\sqrt{n-k} \int_{x=\delta |g_k|+g_k}^{g_k+u^\star} e^{-\frac{x^2}{2}} \, dx.
\end{align*}
Therefore, if $|g_k| = o(\sqrt{n-k)}$, then
\begin{align*}
\liminf_{n\rightarrow \infty} \frac{k}{L_g(k)\sqrt{n-k}} \Prob\left(\tau_g > k | S_n = 0  \right) \geq \sqrt{\frac{2}{\pi}}.
\end{align*}
If $|g_k| = \Theta(\sqrt{n-k)}$, then
\begin{align*}
\liminf_{n\rightarrow \infty} \frac{k}{L_g(k)\sqrt{n-k}} \Prob\left(\tau_g > k | S_n = 0  \right) \geq \left\{\begin{array}{ll}
\sqrt{\frac{2}{\pi}} \left( e^{-\frac{\eta^2(1+\delta)^2}{2}} - \eta \int_{x=\eta(1 + \delta)}^{\infty} e^{-\frac{x^2}{2}} \right) & \textrm{if } \eta <0,\\
\sqrt{\frac{2}{\pi}} \left( e^{-\frac{\eta^2(1+\delta)^2}{2}} - \eta \int_{x=\eta(1 - \delta)}^{\infty} e^{-\frac{x^2}{2}} \right) & \textrm{if } \eta > 0,
\end{array} \right.
\end{align*}
where $\eta=\lim_{n\rightarrow \infty} |g_k|/\sqrt{n-k}$.
Finally, if $|g_k| = \omega(\sqrt{n-k})$ and $g_k <0$, then
\begin{align*}
\liminf_{n\rightarrow \infty} \frac{k}{L_g(k)|g_k|} \Prob\left(\tau_g > k | S_n = 0  \right) \geq \sqrt{\frac{2}{\pi}} \cdot \sqrt{2\pi} = 2.
\end{align*}
We see that as $\delta \downarrow 0$ all lower bounds coincide with the upper bounds, concluding~\eqref{eq:MainTheoremBoundary}.
\end{proof}

\subsection*{Acknowledgment}
The work in this paper is supported by the Netherlands Organisation for Scientific Research (NWO) \\through Gravitation-grant NETWORKS-024.002.003, and a VICI grant.

\appendix
\section{Proofs of Lemma~\ref{lem:SmallValuesVSmallT} and~\ref{lem:SmallValuesVLargeT}}
\label{app:ProofOfLemmas}
\begin{proof}[Proof of Lemma~\ref{lem:SmallValuesVSmallT}]
Note that
\begin{align*}
&\int_{v= g_{(1-\epsilon)k} }^{g_{(1-\epsilon)k} +{x_\epsilon} \sqrt{k}} \Prob\left(S_{(1-\epsilon)k} \in dv ; \tau_g > (1-\epsilon)k \right) \Prob\left(S_{\epsilon k} \in dt ; T_{g_{k,\epsilon}^+} > \epsilon k | S_0=v \right) \\
&\hspace{0.5cm} \leq  \Prob\left(S_{(1-\epsilon)k} \leq g_{(1-\epsilon)k} +{x_\epsilon} \sqrt{k} ; \tau_g > (1-\epsilon)k \right) \sup_{v \in[g_{(1-\epsilon)k} ,g_{(1-\epsilon)k} +{x_\epsilon} \sqrt{k}]} \Prob\left(S_{\epsilon k} \in dt ; T_{g_{k,\epsilon}^+} > \epsilon k | S_0=v \right).
\end{align*}

\noindent
For $v=o(\sqrt{k})$, Equation~\eqref{eq:Doney1} yields uniformly
\begin{align*}
\frac{\Prob\left(S_{\epsilon k} \in dt ; T_{g_{k,\epsilon}^+} > \epsilon k | S_0=v \right)}{dt} \sim \frac{U(t-g_{k,\epsilon}^+)}{\sqrt{2\pi} (\epsilon k)^{3/2}} V(v-g_{k,\epsilon}^+).
\end{align*}
On the other hand, if $v = \Theta(\sqrt{k})$, then~\eqref{eq:Doney3} yields
\begin{align*}
\frac{\Prob\left(S_{\epsilon k} \in dt ; T_{g_{k,\epsilon}^+} > \epsilon k | S_0=v \right)}{dt} \sim \sqrt{\frac{2}{\pi}} \Exp(-\tilde{S}_{T_0})  \frac{U(t-g_{k,\epsilon}^+)}{ (\epsilon k)^{3/2}} (v-g_{k,\epsilon}^+) e^{-\frac{(v-g_{k,\epsilon}^+)^2}{2\epsilon k}},
\end{align*}
We observe that $e^{-x} \leq 1$ for all $x\geq 0$, and $\Exp(-\tilde{S}_{T_0}) \in (0,\infty)$ since the increments of the random walk have variance one. Moreover, due to~\eqref{eq:AssUandV} and~\eqref{eq:ProductExpectationsLadderHeightsVariance}, there exists a constant $c_1 \in (0,\infty)$ such that 
\begin{align*}
\sup_{v \in[g_{(1-\epsilon)k} ,g_{(1-\epsilon)k} +{x_\epsilon} \sqrt{k}]} \Prob\left(S_{\epsilon k} \in dt ; T_{g_{k,\epsilon}^+} > \epsilon k | S_0=v \right) \leq c_1 (g_{(1-\epsilon)k} +{x_\epsilon} \sqrt{k}-g_{k,\epsilon}^+)\frac{U(t-g_{k,\epsilon}^+)}{ (\epsilon k)^{3/2}} \, dt.
\end{align*}
Due to our assumption~\eqref{eq:AdditionalAssumption}, we have that $(g_{(1-\epsilon)k} -g_{k,\epsilon}^+)<2\alpha(\epsilon)|g_k|=o(\sqrt{k})$. Also, as $U(\cdot)$ is non-decreasing and~\eqref{eq:AssUandV} holds, there exists a constant $c_2 \in (0,1)$ such that
\begin{align*}
U(t-g_{k,\epsilon}^+) \leq c_2(1+\alpha(\epsilon)) U(t-g_k).
\end{align*}
That is, there exists a $c_3 \in (0,\infty)$,
\begin{align*}
\sup_{v \in[g_{(1-\epsilon)k} ,g_{(1-\epsilon)k} +{x_\epsilon} \sqrt{k}]} \Prob\left(S_{\epsilon k} \in dt ; T_{g_{k,\epsilon}^+} > \epsilon k | S_0=v \right) \leq (1+o(1)) c_3 x_\epsilon \, \frac{U(t-g_{k})}{ \epsilon^{3/2} k} \, dt.
\end{align*}

\noindent
Finally, since~\eqref{eq:ConditionedRWDensity} and~\eqref{eq:ConditionedRWTauStoppingTime} hold with $L_g(\cdot)$ a slowly varying function,
\begin{align*}
\Prob&\left(S_{(1-\epsilon)k} \leq g_{(1-\epsilon)k} +{x_\epsilon} \sqrt{k} ; \tau_g > (1-\epsilon)k \right) = \Prob\left(S_{(1-\epsilon)k} \leq g_{(1-\epsilon)k} +{x_\epsilon} \sqrt{k} | \tau_g > (1-\epsilon)k \right) \Prob\left(\tau_g > (1-\epsilon)k \right)\\
&\hspace{4cm}\sim \left(1-e^{-\frac{-{x_\epsilon}^2}{2(1-\epsilon)}}\right) \sqrt{\frac{2}{\pi}}\frac{L_g(k)}{\sqrt{(1-\epsilon)k}} < \sqrt{\frac{2}{\pi}} \frac{\epsilon^{3/2}}{\sqrt{1-\epsilon}} \frac{L_g(k)}{\sqrt{k}}.
\end{align*}

\noindent
Multiplying the final two expressions yields the result.
\end{proof}

\begin{proof}[Proof of Lemma~\ref{lem:SmallValuesVLargeT}]
The proof is similar to the proof of Lemma~\ref{lem:SmallValuesVSmallT}, but in this case we have to consider the asymptotics for $t=\Theta(\sqrt{k})$. Again,
\begin{align*}
&\int_{v= g_{(1-\epsilon)k} }^{g_{(1-\epsilon)k} +{x_\epsilon} \sqrt{k}} \Prob\left(S_{(1-\epsilon)k} \in dv ; \tau_g > (1-\epsilon)k \right) \Prob\left(S_{\epsilon k} \in dt ; T_{g_{k,\epsilon}^+} > \epsilon k | S_0=v \right) \\
&\hspace{0.5cm} \leq  \Prob\left(S_{(1-\epsilon)k} \leq g_{(1-\epsilon)k} +{x_\epsilon} \sqrt{k} ; \tau_g > (1-\epsilon)k \right) \sup_{v \in[g_{(1-\epsilon)k} ,g_{(1-\epsilon)k} +{x_\epsilon} \sqrt{k}]} \Prob\left(S_{\epsilon k} \in dt ; T_{g_{k,\epsilon}^+} > \epsilon k | S_0=v \right).
\end{align*}

\noindent
For $v=o(\sqrt{k})$, Equation~\eqref{eq:Doney2} yields uniformly
\begin{align*}
\frac{\Prob\left(S_{\epsilon k} \in dt ; T_{g_{k,\epsilon}^+} > \epsilon k | S_0=v \right)}{dt} \sim \sqrt{\frac{2}{\pi}} \Exp(-{S}_{T_0})  \frac{V(v-g_{k,\epsilon}^+)}{ (\epsilon k)^{3/2}} (t-g_{k,\epsilon}^+) e^{-\frac{(t-g_{k,\epsilon}^+)^2}{2\epsilon k}}.
\end{align*}
Note $e^{-x} \leq 1$ for all $x\geq 0$ and $g_{k,\epsilon}^+ = o(\sqrt{k})$. Since $V(\cdot)$ is non-decreasing and satisfies~\eqref{eq:AssUandV}, we find that there exists a $c_1 \in (0,\infty)$ such that
\begin{align*}
\sup_{v \in[g_{(1-\epsilon)k} ,g_{(1-\epsilon)k} +{x_\epsilon} \sqrt{k}]} \sqrt{\frac{2}{\pi}} \Exp(-{S}_{T_0})  \frac{V(v-g_{k,\epsilon}^+)}{ (\epsilon k)^{3/2}} (t-g_{k,\epsilon}^+) e^{-\frac{(t-g_{k,\epsilon}^+)^2}{2\epsilon k}} &\leq (1+o(1))c_1 \frac{x_\epsilon \sqrt{k}}{ (\epsilon k)^{3/2}} t e^{-\frac{t^2}{2 k}}e^{-\frac{(1-\epsilon) t^2}{2\epsilon k}} \\
&\leq (1+o(1))c_1 \frac{x_\epsilon}{\epsilon^{3/2} k} t e^{-\frac{t^2}{2 k}}.
\end{align*}

\noindent
On the other hand, if $v = \Theta(\sqrt{k})$, then~\eqref{eq:Doney3} yields
\begin{align*}
\frac{\Prob\left(S_{\epsilon k} \in dt ; T_{g_{k,\epsilon}^+} > \epsilon k | S_0=v \right)}{dt} \sim \frac{1}{\sqrt{2 \pi \epsilon k}} \left( e^{-\frac{(v-t)^2}{2\epsilon k}} - e^{-\frac{(v+t-2g_{k,\epsilon}^+)^2}{2\epsilon k}} \right) \sim \frac{1}{\sqrt{2 \pi \epsilon k}} \left( e^{-\frac{(v-t)^2}{2\epsilon k}} - e^{-\frac{(v+t)^2}{2\epsilon k}} \right),
\end{align*}
Using a Taylor expansion, we obtain
\begin{align*}
\left( e^{-\frac{(v-t)^2}{2\epsilon k}} - e^{-\frac{(v+t)^2}{2\epsilon k}} \right)= e^{-\frac{v^2}{2\epsilon k}-\frac{t^2}{2\epsilon k}}\left( e^{\frac{vt}{\epsilon k}} - e^{-\frac{vt}{2\epsilon k}} \right)\leq e^{-\frac{t^2}{2 k}} \left(2\frac{vt}{\epsilon k} + o\left(\frac{vt}{\epsilon k} \right) \right).
\end{align*}
Therefore there exists a $c_2\in (0,\infty)$ such that
\begin{align*}
\sup_{v \in[g_{(1-\epsilon)k} ,g_{(1-\epsilon)k} +{x_\epsilon} \sqrt{k}]} \frac{1}{\sqrt{2 \pi \epsilon k}} \left( e^{-\frac{(v-t)^2}{2\epsilon k}} - e^{-\frac{(v+t)^2}{2\epsilon k}} \right) \leq (1+o(1)) c_2 \frac{x_\epsilon}{\epsilon^{3/2} k} t e^{-\frac{t^2}{2 k}} . 
\end{align*}
We can conclude that there must exist a $c_3 \in (0,\infty)$ such that
\begin{align*}
\sup_{v \in[g_{(1-\epsilon)k} ,g_{(1-\epsilon)k} +{x_\epsilon} \sqrt{k}]} \Prob\left(S_{\epsilon k} \in dt ; T_{g_{k,\epsilon}^+} > \epsilon k | S_0=v \right) &\leq (1+o(1)) c_3  \frac{x_\epsilon}{\epsilon^{3/2} k} t e^{-\frac{t^2}{2 k}}.  
\end{align*}
Again, since~\eqref{eq:ConditionedRWDensity} and~\eqref{eq:ConditionedRWTauStoppingTime} hold with $L_g(\cdot)$ a slowly varying function,
\begin{align*}
\Prob&\left(S_{(1-\epsilon)k} \leq g_{(1-\epsilon)k} +{x_\epsilon} \sqrt{k} ; \tau_g > (1-\epsilon)k \right) \sim \left(1-e^{-\frac{-{x_\epsilon}^2}{2(1-\epsilon)}}\right) \sqrt{\frac{2}{\pi}}\frac{L_g(k)}{\sqrt{(1-\epsilon)k}} < \sqrt{\frac{2}{\pi}} \frac{\epsilon^{3/2}}{\sqrt{1-\epsilon}} \frac{L_g(k)}{\sqrt{k}}.
\end{align*} 
Multiplying this with the previous expression then concludes the proof.
\end{proof}

\section{Useful integral identities}
\begin{lemma}
Suppose $\epsilon \in (0,1)$. For every $x \geq 0$,
\begin{align*}
\int_{y=x }^\infty y^2 e^{-\frac{y^2}{2\epsilon(1-\epsilon)}} \, dy = x \epsilon (1-\epsilon) e^{-\frac{x^2}{2\epsilon(1-\epsilon)}} + (\epsilon(1-\epsilon))^{3/2} \int_{y=x/\sqrt{\epsilon(1-\epsilon)}}^\infty e^{-y^2/2} \, dy.
\end{align*}
\label{lem:Integral1VSmall}
\end{lemma}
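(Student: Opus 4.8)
This is a routine integration-by-parts computation, so the proof is short and the only ``obstacle'' is bookkeeping the substitution constant. Write $\sigma^2 := \epsilon(1-\epsilon)$, which is strictly positive for $\epsilon \in (0,1)$, so all integrals below converge absolutely.

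First I would rewrite the integrand as $y^2 e^{-y^2/(2\sigma^2)} = y \cdot \left(y\, e^{-y^2/(2\sigma^2)}\right)$ and observe that $\frac{d}{dy}\left(-\sigma^2 e^{-y^2/(2\sigma^2)}\right) = y\, e^{-y^2/(2\sigma^2)}$. Integrating by parts with $u=y$ and $dv = y\,e^{-y^2/(2\sigma^2)}\,dy$, so that $v = -\sigma^2 e^{-y^2/(2\sigma^2)}$, gives
\begin{align*}
\int_{y=x}^\infty y^2 e^{-\frac{y^2}{2\sigma^2}}\,dy = \Big[-\sigma^2 y\, e^{-\frac{y^2}{2\sigma^2}}\Big]_{y=x}^{\infty} + \sigma^2 \int_{y=x}^\infty e^{-\frac{y^2}{2\sigma^2}}\,dy = \sigma^2 x\, e^{-\frac{x^2}{2\sigma^2}} + \sigma^2 \int_{y=x}^\infty e^{-\frac{y^2}{2\sigma^2}}\,dy,
\end{align*}
where the boundary term at infinity vanishes because $y\,e^{-y^2/(2\sigma^2)} \to 0$.

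Next I would handle the remaining Gaussian integral by the change of variable $y = \sigma w$ (equivalently $w = y/\sqrt{\epsilon(1-\epsilon)}$), which yields $\int_{y=x}^\infty e^{-y^2/(2\sigma^2)}\,dy = \sigma \int_{w=x/\sigma}^\infty e^{-w^2/2}\,dw$. Substituting back and recalling $\sigma^2 = \epsilon(1-\epsilon)$, hence $\sigma^3 = (\epsilon(1-\epsilon))^{3/2}$, gives exactly
\begin{align*}
\int_{y=x}^\infty y^2 e^{-\frac{y^2}{2\epsilon(1-\epsilon)}}\,dy = x\,\epsilon(1-\epsilon)\, e^{-\frac{x^2}{2\epsilon(1-\epsilon)}} + (\epsilon(1-\epsilon))^{3/2}\int_{y=x/\sqrt{\epsilon(1-\epsilon)}}^\infty e^{-y^2/2}\,dy,
\end{align*}
which is the claimed identity. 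There is no genuine difficulty here; the only point to be careful about is tracking the single factor of $\sigma$ produced by the substitution, so that the Gaussian-tail term comes out with the exponent $3/2$ rather than $1$ or $2$.
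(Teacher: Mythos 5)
Your proof is correct and follows exactly the route the paper indicates (the paper's proof simply states that the identity follows from an integration by parts and a variable substitution). Nothing to add.
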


\begin{proof}
This follows directly from an integration by parts and a variable substitution.
\end{proof}

\begin{lemma}
Suppose $\epsilon \in (0,1)$ and $c \in (0,\infty)$. Then,
\begin{align*}
\int_{y= 0}^\infty & \frac{y}{(1-\epsilon)} e^{-\frac{y^2}{2(1-\epsilon)}}\left(e^{-\frac{(y-c)^2}{2\epsilon}}-e^{-\frac{(y+c)^2}{2\epsilon }} \right) \, dy = \sqrt{2\pi} \sqrt{\epsilon(1-\epsilon)} c e^{-\frac{c^2}{2}}.
\end{align*}
Moreover, for every $x \geq 0$, 
\begin{align*}
\int_{y= 0}^x & \frac{y}{(1-\epsilon)} e^{-\frac{y^2}{2(1-\epsilon)}}\left(e^{-\frac{(y-c)^2}{2\epsilon}}-e^{-\frac{(y+c)^2}{2\epsilon }} \right) \, dy \leq 1-e^{-\frac{x^2}{2(1-\epsilon)}}.
\end{align*}
\label{lem:Integral2VLarge}
\end{lemma}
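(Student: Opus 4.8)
The plan is to collapse the oscillatory integrand to a pair of shifted Gaussians by completing the square, and then to use the odd/even symmetry of the Gaussian to evaluate everything in closed form. First I would rewrite each exponential product: multiplying the exponent $-\frac{y^2}{2(1-\epsilon)}-\frac{(y\mp c)^2}{2\epsilon}$ by $2\epsilon(1-\epsilon)$ and completing the square in $y$ gives
\begin{align*}
e^{-\frac{y^2}{2(1-\epsilon)}}\,e^{-\frac{(y\mp c)^2}{2\epsilon}} = e^{-\frac{c^2}{2}}\, e^{-\frac{(y\mp(1-\epsilon)c)^2}{2\epsilon(1-\epsilon)}}.
\end{align*}
Setting $\mu:=(1-\epsilon)c$ and $\sigma^2:=\epsilon(1-\epsilon)$, the left-hand side of the first identity becomes $\frac{e^{-c^2/2}}{1-\epsilon}\int_0^\infty y\big(e^{-(y-\mu)^2/(2\sigma^2)}-e^{-(y+\mu)^2/(2\sigma^2)}\big)\,dy$.

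Next I would evaluate that integral via the substitutions $u=y-\mu$ in the first term and $u=y+\mu$ in the second. The $u$-linear parts combine to $\int_{-\mu}^{\mu}u\,e^{-u^2/(2\sigma^2)}\,du=0$ by oddness, while the constant parts give $\mu\big(\int_{-\mu}^{\infty}+\int_{\mu}^{\infty}\big)e^{-u^2/(2\sigma^2)}\,du=\mu\int_{-\infty}^{\infty}e^{-u^2/(2\sigma^2)}\,du=\mu\sqrt{2\pi}\,\sigma$, using the evenness of the Gaussian to merge the two tails into the full integral. Substituting back $\mu=(1-\epsilon)c$ and $\sigma=\sqrt{\epsilon(1-\epsilon)}$ yields $\sqrt{2\pi}\sqrt{\epsilon(1-\epsilon)}\,c\,e^{-c^2/2}$, which is the asserted value.

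For the truncated bound I would argue more crudely: for $y\geq0$ and $c>0$ we have $(y-c)^2\leq(y+c)^2$, so the bracket satisfies $e^{-(y-c)^2/(2\epsilon)}-e^{-(y+c)^2/(2\epsilon)}\in[0,e^{-(y-c)^2/(2\epsilon)}]\subseteq[0,1]$; hence the integrand is dominated by $\frac{y}{1-\epsilon}e^{-y^2/(2(1-\epsilon))}$, whose antiderivative is $-e^{-y^2/(2(1-\epsilon))}$, giving the stated bound $1-e^{-x^2/(2(1-\epsilon))}$. I do not expect a genuine obstacle here; the only delicate point is the bookkeeping in the penultimate step of the exact identity, where the two tails $\int_{-\mu}^{\infty}$ and $\int_{\mu}^{\infty}$ must be recognized as summing to the full Gaussian integral rather than (wrongly) being treated as equal tails, together with the minor arithmetic of checking that the $e^{-c^2/2}$ prefactor is indeed common to both terms.
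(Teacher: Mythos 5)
Your proof is correct and rests on the same core idea as the paper's — completing the square — but you organize it differently and somewhat more transparently. The paper first factors $e^{-c^2/(2\epsilon)}$ out of the bracket, leaving a ``$\sinh$''-type integrand $e^{yc/\epsilon}-e^{-yc/\epsilon}$ against a single Gaussian weight, and then invokes a prepared antiderivative identity for $\int y\,e^{-y^2/(2a)+y/b}\,dy$. You instead complete the square fully in each product, pulling out the common prefactor $e^{-c^2/2}$ (which you correctly identify; note the paper's intermediate prefactor is $e^{-c^2/(2\epsilon)}$ because it factors at an earlier stage) and reducing the integrand to two shifted Gaussians; the substitutions $u=y\mp\mu$ then let the linear parts cancel by oddness and the constant parts merge by evenness into $\int_{-\infty}^{\infty}$, giving $\mu\sqrt{2\pi}\,\sigma$. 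This is a self-contained route that avoids having to verify the paper's antiderivative formula, and is arguably easier to check. Your treatment of the truncated bound matches the paper's, using that the bracket is at most $1$ (your additional observation that the bracket is nonnegative is correct though not strictly needed for the upper bound, since the Gaussian weight $\tfrac{y}{1-\epsilon}e^{-y^2/(2(1-\epsilon))}$ is already nonnegative on $[0,x]$).
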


\begin{proof}
First note that for every $a,b \in \mathbb{R}$,
\begin{align*}
\int y e^{-\frac{y^2}{2a}+\frac{y}{b}} \, dy = -a e^{-\frac{y^2}{2a}+\frac{y}{b}} - \frac{a^{3/2}}{b} e^{\frac{a}{2 b^2}} \int_{s=0}^{\frac{a-by}{\sqrt{a}b}} e^{-\frac{s^2}{2}} \, ds.
\end{align*}
Therefore,
\begin{align*}
\int_{y= 0}^\infty & \frac{y}{(1-\epsilon)} e^{-\frac{y^2}{2(1-\epsilon)}}\left(e^{-\frac{(y-c)^2}{2\epsilon}}-e^{-\frac{(y+c)^2}{2\epsilon }} \right) \, dy =e^{-\frac{c^2}{2\epsilon}}\int_{y= 0}^\infty  \frac{y}{(1-\epsilon)} e^{-\frac{y^2}{2\epsilon(1-\epsilon)}}\left(e^{\frac{yc}{\epsilon}}-e^{-\frac{yc}{\epsilon }} \right) \, dy \\
&=\frac{1}{1-\epsilon} e^{-\frac{c^2}{2\epsilon}} \cdot \sqrt{2\pi} \epsilon^{1/2} (1-\epsilon)^{3/2} c e^{\frac{c^2}{2} \frac{1-\epsilon}{\epsilon}} = \sqrt{2\pi} \sqrt{\epsilon(1-\epsilon)} c e^{-\frac{c^2}{2}}.
\end{align*}

The second claim follows easily by observing that for every $y \in \mathbb{R}$,
\begin{align*}
e^{-\frac{(y-c)^2}{2\epsilon}}-e^{-\frac{(y+c)^2}{2\epsilon }} \leq 1.
\end{align*}
\end{proof}

\begin{lemma}
Let $a,b \in \mathbb{R}$ be such that $0\leq a \leq b \leq \infty$. Then
\begin{align*}
\int_{y=a}^{b} y e^{-\frac{(y+g)^2}{2(n-k)}} \, dy = (n-k) \left(e^{-\frac{(a+g_k)^2}{2(n-k)}}-e^{-\frac{(b+g_k)^2}{2(n-k)}} \right)-g_k\sqrt{n-k}\int_{x=(a+g_k)/\sqrt{n-k}}^{(b+g_k)/\sqrt{n-k}} e^{-\frac{x^2}{2}} \, dx.
\end{align*}
In particular,
\begin{align*}
\int_{y=0}^{\infty} y e^{-\frac{(y+g)^2}{2(n-k)}} \, dy = (n-k) e^{-\frac{g_k^2}{2(n-k)}} -g_k\sqrt{n-k}\int_{x=g_k/\sqrt{n-k}}^{\infty} e^{-\frac{x^2}{2}} \, dx.
\end{align*}
\label{lem:Integral3}
\end{lemma}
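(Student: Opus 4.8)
The plan is to reduce the integral to elementary forms by the affine substitution $u = y + g_k$. Under this change of variables the left-hand side becomes
$\int_{u=a+g_k}^{b+g_k} (u - g_k)\, e^{-u^2/(2(n-k))}\, du$,
which I would immediately split as the difference of two integrals, the first with integrand $u\, e^{-u^2/(2(n-k))}$ and the second $-g_k\, e^{-u^2/(2(n-k))}$.

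For the first piece, $-(n-k)\, e^{-u^2/(2(n-k))}$ is an exact antiderivative, so evaluating between the limits $a+g_k$ and $b+g_k$ produces the term $(n-k)\bigl(e^{-(a+g_k)^2/(2(n-k))} - e^{-(b+g_k)^2/(2(n-k))}\bigr)$. For the second piece, $-g_k\int_{a+g_k}^{b+g_k} e^{-u^2/(2(n-k))}\, du$, I would substitute $x = u/\sqrt{n-k}$ to put it in standard Gaussian form, obtaining $-g_k\sqrt{n-k}\int_{(a+g_k)/\sqrt{n-k}}^{(b+g_k)/\sqrt{n-k}} e^{-x^2/2}\, dx$. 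Adding the two contributions gives exactly the claimed identity.

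The displayed special case then follows by taking $a=0$ and letting $b\to\infty$: since $n-k>0$, the factor $e^{-(b+g_k)^2/(2(n-k))}$ tends to $0$, and the upper limit of the Gaussian integral becomes $+\infty$; monotone convergence of the partial integrals (or the dominated convergence theorem) justifies passing to the limit, leaving $(n-k)\, e^{-g_k^2/(2(n-k))} - g_k\sqrt{n-k}\int_{g_k/\sqrt{n-k}}^{\infty} e^{-x^2/2}\, dx$.

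There is no genuine obstacle here: the only point deserving a word of care is the improper upper limit $b=\infty$ in the second statement, which is controlled by the decay of the Gaussian factor; everything else is a single substitution together with an exact antiderivative.
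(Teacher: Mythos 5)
Your argument is correct and is exactly what the paper has in mind: the paper's proof is the single line ``This is an easy consequence of a variable substitution,'' and your substitution $u = y + g_k$, split into the exact-antiderivative piece and the Gaussian piece rescaled by $x = u/\sqrt{n-k}$, is precisely that computation. The limiting case $b\to\infty$ is handled correctly as well, so nothing is missing.
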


\begin{proof}
This is an easy consequence of a variable substitution.
\end{proof}

\bibliographystyle{plain}

\end{document}